\documentclass[10pt,a4paper]{amsart}
 \usepackage{amsmath,amssymb,amsthm,latexsym,capt-of,graphicx,array}
 \input xy
\xyoption{all}
 \newtheorem{theorem}{Theorem}[section]
\newtheorem{lemma}[theorem]{Lemma}
\newtheorem{proposition}[theorem]{Proposition}
\newtheorem{corollary}[theorem]{Corollary}

\newcommand{\fp}{$\{\alpha_n\}$ }
\newcommand{\fpti}{$\{\tilde{\alpha}_i\}$ }
\newcommand{\bwip}{$\pi_{0}T(e)\text{ } $}
\newcommand{\inversedendrite}{\hat{D}_{\tau}}
\newcommand{\ray}{\Phi}

\newenvironment{example}[1][Example]{\begin{trivlist}
\item[\hskip \labelsep {\bfseries #1}]}{\end{trivlist}}

 \begin{document}
 \title{Asymptotic parameterizations in inverse limit spaces of dendrites}
 \author{Brent Hamilton}
 \address{Department of Mathematics, Baylor University, Waco, TX}
 \keywords{inverse limit, dendrite, unimodal, itinerary, critical point, arc-components}
 \subjclass{MSC (2010): 37B10, 54F15}
 \date{\today}

 \begin{abstract}
 In this paper, we study asymptotic behavior arising in inverse limit spaces of dendrites. In particular, the inverse limit is constructed with a single unimodal bonding map, for which points have unique itineraries and the critical point is periodic. Using symbolic dynamics, sufficient conditions for two rays in the inverse limit space to have asymptotic parameterizations are given. Being a topological invariant, the classification of asymptotic parameterizations would be a useful tool when determining if two spaces are homeomorphic.
 \end{abstract}
 \maketitle
\section{Introduction}
Inverse limit spaces with unimodal bonding maps on the interval have been extensively studied, and symbolic dynamics relating to the orbit of the critical point has proven fruitful in the study of such spaces \cite{BBBook, BrDi, BBSub, BAsymp, BEmb, RaStiSt, StSt}. A natural question to ask is when two inverse limit spaces are homeomorphic. Most notable along this line of inquiry is Ingram's Conjecture, which has generated much research \cite{BaDi, OIC, KPer, RaStiNR, SPreper}, culminating in the general solution in \cite{Ingram}. A natural extension of this work is to consider unimodal bonding maps on a space other than the interval. In \cite{B1} and \cite{B2}, a treatment of dendrites (i.e. continua that are locally connected and uniquely arcwise connected) was developed, wherein points are identified with their itineraries under a map $f$. This approach was used in \cite{B2} to prove a generalization of Ingram's Conjecture for $k$-stars. One advantage to this approach is that difficulties in determining \emph{admissibility}, that is, whether a given sequence is realized as the itinerary of a point in the space in question, largely vanish. In the interval case, determining admissibility can be quite laborious. 

As asymptotic rays are a topological invariant, their study can be useful in determining when two spaces are homeomorphic. Here, by a \emph{ray}, we mean the image of the nonnegative reals under a continuous bijection. Let $f:I\rightarrow I$ be a unimodal bonding map with critical point $t$, where $I$ is the unit interval, and let $(I,f)$ denote the corresponding inverse limit space. In \cite{BDH}, it was shown that if $t$ is periodic with period $N$, then $(I,f)$ contains at most $2(N-2)$ asymptotic rays, excepting any on the $N$ arc-components with endpoints. This work was extended in \cite{BAsymp}, wherein a symbolic characterization of asymptotic rays is presented. Additionally, ignoring admissibility, sufficient conditions for rays to be asymptotic were given, and all resultant asymptotic rays, up to period 8, are listed. In some cases, the upper bound of $2(N-2)$ is witnessed. 

In this paper, we take many of the results given in \cite{BAsymp}, and give analagous results for inverse limit spaces of dendrites, as developed in \cite{B1} and \cite{B2}. The sufficient conditions for arc-components to have asymptotic parameterizations given in \cite{BAsymp} carry over to the dendrite case, with some surprising differences. For one, all the itineraries are easily seen to be admissible. This gives rise to new itineraries which were not admissible in the unimodal case. Additionally, the space $(I,f)$ is known to be chainable, whereas the inverse limit space of dendrites is not. This ultimately results in a countably infinite collection of distinct asymptotic rays on a single arc-component. 

The organization of this paper is as follows. The next section gives a survey of the relevant notation and results from \cite{B1} and \cite{B2}. Section 3 extends many results from \cite{BAsymp} to the case of dendrites. Section 4 states the main theory, and lists the relevant asymptotic arc-components for all admissible kneading sequences up to period 5. 

\section{Symbolic Representation of Dendrites}
We use the symbols $\mathbb{N}$, $\mathbb{Z}$, and $\mathbb{Z_-}$ to denote, respectively, the positive integers, the integers, and the negative integers. 
Suppose $X$ is a topological space, and $f:X\rightarrow X$ is a continous map. We define $(X,f)$ to be a subset of the product space $X^{\mathbb{Z_-}}$, where $(X,f)=\{(\ldots, x_{-2}, x_{-1}): f(x_{i-1})=x_{i}\text{ for all }i\in\mathbb{Z_-}\}$. We say that $(X,f)$ is the \emph{inverse limit} of $X$ with \emph{bonding map} $f$. 

A \emph{dendrite} is a locally connected, uniquely arcwise connected continuum. If $D$ is a dendrite, a continuous function $f:D\rightarrow D$ is said to be \emph{unimodal} if it is locally one-to-one, excepting a single point $t$, referred to as the \emph{critical point}. A \emph{pseudoleg} is a union of components of $D_{\tau}-\{t\}$ on which $f$ is one-to-one. For the entirety of this paper, we shall assume the critical point $t$ is periodic, and that $D_{\tau}-\{t\}$ consists of two pseudolegs. The pseudoleg containing $f(t)$ is labelled $L_1$, and the other pseudoleg is labelled $L_2$. If $x\in D$, we define the \emph{itinerary} of $x$, denoted $\imath(x)=\imath_0\imath_1\imath_2\ldots$ as follows:

 \begin{displaymath}
   \imath_n = \left\{
     \begin{array}{lr}
       1 \text{ if } f^n(x)\in L_1\\
       2 \text{ if } f^n(x)\in L_2\\
       * \text{ if } f^n(x)=t
     \end{array}
   \right.
\end{displaymath}

By the \emph{kneading sequence}, we mean the itinerary of the critical point $t$. If $\imath(x)\neq\imath(y)$ whenever $x\neq y$, then we say $f$ has the \emph{unique itinerary property}. A unimodal map satisfying the unique itinerary property is said to be \emph{tentish}. We topologize the symbol set $\{*,1,2\}$ with the basis $\{\{1\},\{2\},\{*,1,2\}\}$, and extend this to the product topology for $\{*,1,2\}^{\mathbb{N}}$ and $\{*,1,2\}^{\mathbb{Z}}$. If $a$,$b\in\{*,1,2\}$, we say $a\approx b$ if either $a=b$ or if at least one of $a$ or $b$ equals $*$, and we expand the definition of ``$\approx$" to product spaces in the obvious way. Let $\sigma$ denote the shift map on $\{*,1,2\}^{\mathbb{N}}$. A sequence $\tau=\tau_0\tau_1\tau_2\ldots\in\{*,1,2\}^{\mathbb{N}}$ is said to be \emph{acceptable} if, for all $n\in\mathbb{N}$, $\sigma^n(\tau)=\tau$ whenever $\sigma^n(\tau)\approx\tau$. Given an acceptable sequence $\tau$, an element $x=x_0x_1x_2\ldots\in\{*,1,2\}^{\mathbb{N}}$ is said to be \emph{$\tau$-admissible} if, whenever $x_n=*$, we have $\sigma^{n}x=\tau$ and $\sigma^{n}x=\tau$ whenever $\sigma^{n}(x)\approx\tau$. The space $D_{\tau}$ is defined to be the set of all $\tau$-admissible sequences, and was shown in \cite{B1} to be a dendrite. 

\begin{proposition}\cite[2.17]{B1}
Let $f:D\rightarrow D$ be a tentish dendrite map with kneading sequence $\tau$. Then the itinerary map $\imath:D\rightarrow D_{\tau}$ is a homeomorphism onto its range, and $\imath$ is a topological conjugation between $f$ and $\sigma |_{\imath(D)}$. 
\end{proposition}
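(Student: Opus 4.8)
The plan is to establish the three separate assertions in sequence: that $\imath$ is continuous, that $\imath$ is injective with continuous inverse on its image (hence a homeomorphism onto its range), and finally that $\imath \circ f = \sigma \circ \imath$, i.e. $\imath$ is a conjugacy. Injectivity is free: it is precisely the unique itinerary property, which is part of the definition of \emph{tentish}. The conjugacy relation is also essentially a bookkeeping identity, since $\imath_n(x)$ records the pseudoleg containing $f^n(x)$, so the $n$-th symbol of $\imath(f(x))$ is the $(n+1)$-st symbol of $\imath(x)$; one only has to check the three cases ($f^n(x) \in L_1$, $\in L_2$, $= t$) and note they are preserved. So the substantive work is continuity of $\imath$ and of $\imath^{-1}$.

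For continuity of $\imath$, I would argue at a point $x \in D$ and a basic open neighborhood of $\imath(x)$ in $D_\tau$, which (by the chosen basis on $\{*,1,2\}$ and the product topology) is determined by finitely many coordinates $n = 0, \dots, m$, where each specified coordinate is either the singleton $\{1\}$, the singleton $\{2\}$, or the whole set $\{*,1,2\}$ (the latter imposing no condition). Thus I need a neighborhood $U$ of $x$ such that for each $n \le m$ with $\imath_n(x) \in \{1,2\}$, every $y \in U$ has $f^n(y)$ in the same open pseudoleg $L_{\imath_n(x)}$. Since $L_1$ and $L_2$ are open in $D$ (they are unions of components of $D - \{t\}$, and $D - \{t\}$ is open with at most... well, finitely many, in fact two, components here, so each pseudoleg is open), and $f^n$ is continuous, the set $(f^n)^{-1}(L_{\imath_n(x)})$ is an open neighborhood of $x$; intersecting these finitely many sets gives the desired $U$. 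The only delicate point is coordinates where $\imath_n(x) = *$, i.e. $f^n(x) = t$; but then the target basic set at coordinate $n$ must be $\{*,1,2\}$ (since $\tau$-admissibility forces $*$ to appear only at a $t$, and the basis element $\{*,1,2\}$ is the only basic open set containing $*$), so no constraint is imposed there and continuity is unaffected.

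For continuity of $\imath^{-1}$ on $\imath(D)$, the cleanest route is to invoke compactness: $D$ is a continuum, hence compact, and $D_\tau$ (being a dendrite) is Hausdorff, so a continuous bijection $\imath : D \to \imath(D)$ from a compact space to a Hausdorff space is automatically a homeomorphism onto its image. This sidesteps constructing $\imath^{-1}$ explicitly. I should double-check that $\imath(D)$ with the subspace topology from $D_\tau$ is the right target and that $\imath(D) \subseteq D_\tau$ — i.e. that every itinerary of a point of $D$ is in fact $\tau$-admissible; this follows because if $f^n(x) = t$ then $f^{n+k}(x) = f^k(t)$ so $\sigma^n \imath(x) = \tau$, and the ``$\approx$'' clause holds because $f$ is tentish (two distinct points cannot share an itinerary, and an itinerary $\approx$-equal to $\tau$ at a shift would correspond to a point mapping the same way as $t$, forcing it to be $t$).

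The main obstacle is marshaling these topological facts cleanly rather than any deep difficulty: specifically, being careful that the basis $\{\{1\},\{2\},\{*,1,2\}\}$ on the symbol set interacts correctly with the positions where $*$ occurs, so that continuity of $\imath$ genuinely reduces to preimages of the open pseudolegs; and verifying the inclusion $\imath(D) \subseteq D_\tau$ so that the compactness argument applies in the stated codomain. I expect each of these to be short once set up correctly, and the conjugacy and injectivity claims to be immediate from the definitions.
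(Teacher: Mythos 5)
The paper offers no proof of this proposition: it is imported verbatim from \cite[2.17]{B1}, so there is no ``paper's own proof'' to compare against, and your proposal must stand on its own. Its overall architecture is the natural one and most of it is sound: injectivity is indeed the unique itinerary property; the conjugacy $\imath\circ f=\sigma\circ\imath$ is bookkeeping; continuity of $\imath$ correctly exploits the coarse basis $\{\{1\},\{2\},\{*,1,2\}\}$ (so that coordinates where $f^n(x)=t$ impose no constraint) together with the openness of the pseudolegs, which follows from local connectedness of $D$; and the compact-to-Hausdorff argument for $\imath^{-1}$ is legitimate once one knows $\imath(D)\subseteq D_\tau$ and that $D_\tau$ is Hausdorff (the latter is available since the paper records that $D_\tau$ is a dendrite, or can be checked directly from admissibility).

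The genuine gap is exactly the point you flag but then dismiss too quickly: the verification that $\imath(x)$ is $\tau$-admissible, specifically the clause that $\sigma^n\imath(x)=\tau$ whenever $\sigma^n\imath(x)\approx\tau$. Your justification --- ``an itinerary $\approx$-equal to $\tau$ at a shift would correspond to a point mapping the same way as $t$, forcing it to be $t$'' --- does not follow from the unique itinerary property, because that property only forbids two distinct points from having \emph{equal} itineraries, and $\approx$-equality is strictly weaker: the sequences may genuinely differ at every coordinate where $\tau$ carries a $*$. Concretely, if there were a point $y\neq t$ in a pseudoleg $L_i$ with $f(y)=f(t)$, then $\imath(y)=i\,\tau_1\tau_2\ldots$, which satisfies $\imath(y)\approx\tau$ and $\imath(y)\neq\tau$ while violating no uniqueness of itineraries; such a $y$ would have a non-admissible itinerary and your inclusion $\imath(D)\subseteq D_\tau$ would fail. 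Ruling this out requires an actual argument about the dynamics --- e.g., examining the arc $[y,t]$, using that $f$ is one-to-one on each pseudoleg and locally one-to-one off $t$, and inducting along the orbit to show that $\imath(y)\approx\tau$ forces $y=t$. This is where the real content of Baldwin's proposition lives, and the proposal as written does not supply it.
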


Let $\hat{x}=\ldots \hat{x}_{-2}\hat{x}_{-1}.\hat{x}_0\hat{x}_1\ldots\in\{*,1,2\}^{\mathbb{Z}}$. For each $n\in\mathbb{Z}$, define $\pi_{n}(\hat{x})=\hat{x}_n\hat{x}_{n+1}\hat{x}_{n+2}\ldots$. Let $\hat{D}_{\tau}=\{\hat{x}\in\{*,1,2\}^{\mathbb{Z}}:\pi_n(\hat{x})\in D_{\tau}\}$, and let $\hat{\sigma}$ denote the shift map on $\hat{D}_{\tau}$. 

\begin{proposition}\cite[2.4]{B2}
Define $h:\hat{D}_{\tau}\rightarrow (D_{\tau},\sigma)$ by $h(\hat{x})=(\ldots,\pi_{-3}(\hat{x}),\pi_{-2}(\hat{x}),\pi_{-1}(\hat{x}))$. Then $h$ is a topological conjugation between $\hat{\sigma}$ and the corresponding shift map for $(D_{\tau},\sigma)$. 
\end{proposition}

 \begin{proposition}\label{nolargefolds}
 \cite[2.25]{B1}
Let $A$ be an arc in $\hat{D}_{\tau}$ with endpoints $\hat{x}$ and $\hat{y}$, and  suppose $k=\min\{i|x_i\not\approx y_i\}$ is finite. Then if $\hat{z}\in A$ and $i<k$, we have $x_i\approx z_i\approx y_i$.
 \end{proposition}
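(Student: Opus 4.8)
The plan is to argue by contradiction, isolating the first coordinate at which the arc ``escapes'' a pseudoleg and then ruling that out in two cases, according to how $A$ projects into $D_\tau$ at that coordinate. First, some reductions: the assertion is symmetric in $\hat x$ and $\hat y$, and at a coordinate $i$ with $x_i=*$ the statement $x_i\approx z_i$ is automatic (while if $x_i,y_i\in\{1,2\}$ then $x_i\approx y_i$ forces $x_i=y_i$); so it suffices to show that for each $i<k$ with $x_i\neq *$ every $\hat z\in A$ satisfies $z_i\approx x_i$, and likewise with $\hat y$ in place of $\hat x$. Suppose this fails, and let $m$ be the least $j<k$ for which some $\hat w\in A$ has $w_j\not\approx x_j$ or $w_j\not\approx y_j$. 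After interchanging $\hat x$ and $\hat y$ if needed, there is $\hat z\in A$ with $z_m\not\approx x_m$; since $z_m\not\approx x_m$ forces $x_m\neq *$ and $m<k$ gives $x_m\approx y_m$, I may normalize so that $x_m=1$, $z_m=2$, and $y_m\in\{*,1\}$. Minimality of $m$ says: for every $j<m$ and every $\hat w\in A$, $w_j\approx x_j$ and $w_j\approx y_j$.

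Next I would work with the projections $\pi_j\colon\hat D_\tau\to D_\tau$, using $\pi_{j+1}=\sigma\circ\pi_j$. Write $L_1=\{w\in D_\tau:w_0=1\}$ and $L_2=\{w\in D_\tau:w_0=2\}$ for the two pseudolegs, so that $D_\tau-\{\tau\}=L_1\sqcup L_2$ (note $w_0=*$ forces $w=\tau$), and $D_\tau=\overline{L_1}\cup\overline{L_2}$ with $\overline{L_a}=L_a\cup\{\tau\}$. For each $j<m$ the subcontinuum $\pi_j(A)$ of the dendrite $D_\tau$ either meets both $L_1$ and $L_2$, or is contained in a single $\overline{L_a}$; this dichotomy drives the proof. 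Suppose first that $\pi_{j_0}(A)$ meets both pseudolegs for some $j_0<m$. Then $A$ contains points whose $j_0$-th coordinates are $1$ and $2$; by minimality of $m$ these are both $\approx x_{j_0}$, and the only symbol $\approx$ both $1$ and $2$ is $*$, so $x_{j_0}=*$, and likewise $y_{j_0}=*$. Hence $\pi_{j_0}(\hat x)=\pi_{j_0}(\hat y)=\tau$, so $x_i=\tau_{i-j_0}=y_i$ for all $i\ge j_0$; in particular $x_k=y_k$, contradicting $x_k\not\approx y_k$.

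In the remaining case, every $\pi_j(A)$ with $j<m$ lies in one closed pseudoleg. Here I would use that $\sigma$ is one-to-one on each closed pseudoleg: the pseudolegs are defined so that $\sigma$ is injective on them, and on the closure the only possible identification, $\tau$ with $a\cdot\sigma(\tau)$, is excluded because $a\cdot\sigma(\tau)\approx\tau$ yet $a\cdot\sigma(\tau)\neq\tau$, so $a\cdot\sigma(\tau)\notin D_\tau$. A downward induction along $\pi_{j+1}=\sigma\circ\pi_j$ then gives that $\pi_m|_A$ is injective: if $\hat w,\hat w'\in A$ have $\pi_m(\hat w)=\pi_m(\hat w')$ they agree at every coordinate $\ge m$, and at each $j<m$ one cancels $\sigma$ in $\sigma(\pi_j(\hat w))=\pi_{j+1}(\hat w)=\pi_{j+1}(\hat w')=\sigma(\pi_j(\hat w'))$ using injectivity of $\sigma$ on the closed pseudoleg containing $\pi_j(A)$. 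Thus $\pi_m|_A$ is a homeomorphism onto $\pi_m(A)$, so $\pi_m(A)$ is an arc in $D_\tau$ whose endpoints $\pi_m(\hat x)$ and $\pi_m(\hat y)$ both lie in the subdendrite $\overline{L_1}$ (as $x_m=1$ and $y_m\in\{*,1\}$); being the arc of $D_\tau$ joining two points of $\overline{L_1}$, it is contained in $\overline{L_1}$. But $z_m=2$ gives $\pi_m(\hat z)\in L_2$, and $\pi_m(\hat z)\in\pi_m(A)$, while $L_2\cap\overline{L_1}=\emptyset$ — a contradiction. This completes the argument.

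I do not expect a deep conceptual obstacle; the care is in the bookkeeping. The two steps most prone to error are: (a) the reduction and the use of minimality, where the whole argument hinges on the conclusion being vacuous at coordinates carrying the symbol $*$ — this is precisely what lets ``$\pi_{j_0}(A)$ meets both pseudolegs'' be upgraded to ``$x_{j_0}=y_{j_0}=*$'', and then to $\pi_{j_0}(\hat x)=\pi_{j_0}(\hat y)=\tau$; and (b) the claim that $\sigma$ stays injective on the \emph{closures} $\overline{L_1},\overline{L_2}$ of the pseudolegs, which is where acceptability of $\tau$ (not merely the definition of a pseudoleg) is needed. One should also bear in mind that $k$, and hence all indices $j<m$ above, may be negative, though nothing in the argument uses positivity.
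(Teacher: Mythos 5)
The paper offers no proof of this proposition at all --- it is quoted verbatim from Baldwin \cite[2.25]{B1} --- so your argument has to be judged on its own terms rather than against an in-paper proof. Most of it is sound: the reduction to coordinates where $x_i\neq *$, the dichotomy on whether $\pi_j(A)$ meets both open legs, the observation that a connected set meeting both $L_1$ and $L_2$ must contain $\tau$ (whence $x_{j_0}=y_{j_0}=*$ forces $\pi_{j_0}(\hat x)=\pi_{j_0}(\hat y)=\tau$ and contradicts $x_k\not\approx y_k$), the injectivity of $\sigma$ on $L_a\cup\{\tau\}$ via acceptability, and the final separation argument in Case 2 are all correct.

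The gap is at the very first step: you take $m$ to be the \emph{least} $j<k$ at which some point of $A$ disagrees with $\hat x$ or $\hat y$. These indices range over $\{j\in\mathbb{Z}:j<k\}$, which is unbounded below, so a nonempty set of bad indices need not have a least element; your closing remark that ``nothing in the argument uses positivity'' is exactly where this hides, since the existence of a minimal counterexample is a well-ordering statement. Your entire Case 2 hinges on knowing that \emph{every} level $j<m$ stays inside one closed pseudoleg --- that is what makes $\pi_m|_A$ injective --- and this is precisely the information minimality was supposed to supply. The unbounded case is not vacuous a priori: if, say, $A$ contains a shift of the bi-infinite periodic point $(\tau_0\ldots\tau_{N-1})^{\infty}.(\tau_0\ldots\tau_{N-1})^{\infty}$, then $\tau\in\pi_j(A)$ for infinitely many $j\to-\infty$, and one must separately rule out that these levels produce discrepancies with $\hat x$ or $\hat y$. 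To close the argument you need either a preliminary lemma showing that $\{j:\pi_j(A)\text{ meets both }L_1\text{ and }L_2\}$ is bounded below for any arc $A$, or a restructuring of the induction that does not presuppose a first bad coordinate. Note that fixing a single bad index $i$ and minimizing only over the finite set $[i,k)\cap\mathbb{Z}$ does not repair this: the minimum is then $i$ itself, and you learn nothing about the levels below $i$ that the injectivity argument requires.
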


 If $S\subseteq D_{\tau}$, we let $[S]$ denote the smallest connected subset of $D_{\tau}$ containing $S$. We may use $[x,y]$, in place of $[\{x,y\}]$,  to denote the unique arc in $D_{\tau}$ having $x$ and $y$ as endpoints, and $(x,y)=[x,y]-\{x,y\}$. Given two points $x$ and $y$ of $D_{\tau}$, it is often useful to find a point in $(x,y)$. The following technique, dubbed the ``$\mu$-process," was developed in \cite{B1}, and was useful in proving many results (e.g. that $D_{\tau}$ is connected). If $x=x_0x_1\ldots$ and $y=y_0y_1\ldots$ are distinct elements of $D_{\tau}$, then there exists a minimal $n$ so that $x_n\not\approx y_n$. Define $\mu'(x,y)=\mu'_1\mu'_2\ldots$ as follows. If $i<n$, let $\mu'_i\in\{x_i,y_i\}$ be chosen so that $\mu'_i\neq *$. If $i=n$, set $\mu'_i=*$, and for $i>n$, $\mu'_i=\tau_{i-n}$. Then there exists a unique $\mu\in D_{\tau}$ with $\mu\approx\mu'(x,y)$. Moreover, $\mu\in (x,y)$. 

We conclude this section with a characterization of arc-components in $\hat{D}_{\tau}$. If $\hat{x}=\ldots \hat{x}_{-2}\hat{x}_{-1}.\hat{x}_{0}\hat{x}_{1}\ldots$ is an element of $\hat{D}_{\tau}$, we define the \emph{backwards itinerary} of $\hat{x}$, denoted $e(\hat{x})$, to be the reverse sequence $\ldots \hat{x}_{-3}\hat{x}_{-2}\hat{x}_{-1}$. We define the equivalence class $e_*$ via $e_*=\{e(\hat{x}):\hat{x}\in\hat{D}_{\tau}\text{ and } e(\hat{x})_i=e_i \text{ whenever } i\leq M \text{ for some } M\in\mathbb{Z}_-\}$. Given two backwards itineraries $e=e(\hat{x})$ and $\widetilde{e}=e(\hat{y})$, we define the \emph{sequence of discrepancies} as follows.  Let $k_1 = \min\{i|e_{-i}\neq \widetilde{e}_{-i}\}$ and inductively define $k_{i+1}=\min\{i>k_i|e_{-i}\neq\widetilde{e}_{-i}\}$. We call $k_1$ the \emph{first discrepancy} between $e$ and $\widetilde{e}$. If $k_{i+1}$ does not exist, we leave it undefined and say the sequence of discrepancies is finite. In this case, the sequences $e$ and $\widetilde{e}$ have the same tails, and we have $e_*=\widetilde{e}_*$.

\begin{proposition}\label{arccharacterization}
\cite[2.7]{B2}
Let $\hat{x}$ and $\hat{y}$ be points in $\hat{D}_{\tau}$, where $\tau$ is of period $N$. Let $\{k_i\}$ denote the sequence of discrepancies between $e=e(\hat{x})$ and $\widetilde{e}=e(\hat{y})$. Then $\hat{x}$ and $\hat{y}$ are in the same arc-component if and only if $\{k_i\}$ is finite or if there exists a natural number $M$ so that if $k_i$,$k_j \geq M$, we have $k_i\equiv k_j \mod N$ and, for each $i$, $x_{-k_{i+1}}\ldots x_{-k_i+1} \approx (\tau_{0}\ldots\tau_{N-1})^{n_i} \approx y_{-k_{i+1}}\ldots y_{-k_i+1}$, where $n_i=(k_{i+1}-k_{i})/N$.
\end{proposition}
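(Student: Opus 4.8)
The plan is to prove the two implications separately, working inside $\hat D_\tau$ (so ``arc-component'' refers to the inverse limit) and relying on Proposition \ref{nolargefolds} for structural control of arcs. Two elementary facts import the modulus $N$ into the statement. First, each coordinate map $\hat z\mapsto\hat z_m$ is continuous into the non-Hausdorff symbol space, so along a path a fixed coordinate can move from $1$ to $2$ only by passing through $*$; and if $\hat z_m=*$ then $\tau$-admissibility of $\pi_m(\hat z)$ forces $\pi_m(\hat z)=\tau$, so at that instant \emph{all} coordinates of $\hat z$ from position $m$ on equal $\tau_0\tau_1\tau_2\cdots$. Second, since $\tau$ is acceptable and has exact period $N$ (so its only $*$'s lie at positions divisible by $N$), the sequences $\tau$ and $\sigma^d(\tau)$ are $\approx$-compatible if and only if $N\mid d$.

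For the forward implication, suppose $\hat x$ and $\hat y$ lie on a common arc $A=\gamma([0,1])$ with $\gamma(0)=\hat x$, $\gamma(1)=\hat y$, and suppose $\{k_i\}$ is infinite. It suffices to treat discrepancies $-k$ with $\{\hat x_{-k},\hat y_{-k}\}=\{1,2\}$, since a discrepancy involving a $*$ forces one of $\pi_{-k}(\hat x),\pi_{-k}(\hat y)$ to equal $\tau$ and so occurs only in a periodic family absorbed by the same bookkeeping. At such a $-k$, the coordinate $t\mapsto\gamma(t)_{-k}$ takes the values $1$ and $2$, hence the value $*$ at some parameter; there the point $\hat z\in A$ has $\pi_{-k}(\hat z)=\tau$, so $\hat z$ agrees with $\tau_0\tau_1\cdots$ on every coordinate from $-k$ on. Applying Proposition \ref{nolargefolds} to the subarcs of $A$ from $\hat z$ to $\hat y$ and from $\hat x$ to $\hat z$ forces $\hat x$ and $\hat y$ to coincide, up to $\approx$, with the periodic sequence $\tau_0\tau_1\cdots$ on the coordinates strictly between $-k$ and the next such discrepancy. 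Running this at a cofinal family of discrepancies and comparing overlapping ranges, the criterion ``$N\mid d$'' from the previous paragraph then forces the surviving discrepancy positions to be mutually congruent mod $N$ and each intervening block of $\hat x$ (equivalently $\hat y$) to be $\approx(\tau_0\cdots\tau_{N-1})^{n_i}$, $n_i=(k_{i+1}-k_i)/N$ --- exactly the asserted condition.

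For the reverse implication, suppose the condition holds. If $\{k_i\}$ is finite then $\hat x$ and $\hat y$ share a common backward tail, and one connects them by an arc that slides the finitely many disagreeing negative coordinates, then the forward coordinates, one at a time; the intermediate sequences remain $\tau$-admissible, which is the advantage of this symbolic setting emphasized in \cite{B1, B2}. In the general case, first shift by $\hat\sigma$ and absorb the discrepancies below $M$ as in the finite case, reducing to the situation where every remaining discrepancy is $\equiv c\pmod N$ and every inter-discrepancy block of both $\hat x$ and $\hat y$ is $\approx$ a power of $\tau_0\cdots\tau_{N-1}$. Then exhibit $\hat x$ and $\hat y$ as the two ends of an increasing union of arcs $A_n$ in $\hat D_\tau$, where $A_n$ reconciles the first $n$ discrepancies while leaving coordinates sufficiently far in the past untouched; the periodicity of the intervening blocks is precisely what makes the passage from $A_n$ to $A_{n+1}$ cost a single fold, so that $A_n$ embeds into $A_{n+1}$ onto an end. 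A last appeal to Proposition \ref{nolargefolds} excludes spurious doubling back and identifies $\overline{\bigcup_n A_n}$ as an arc from $\hat x$ to $\hat y$.

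The main obstacle is this reverse construction: verifying that the union of the nested arcs $A_n$ is homeomorphic to $[0,1]$ and truly lies in $\hat D_\tau$ --- the periodicity hypothesis is exactly what prevents the $A_n$ from wandering and keeps each $A_n\hookrightarrow A_{n+1}$ an embedding onto an end. A secondary nuisance, in the forward direction, is that $\hat x$ and $\hat y$ may have no leftmost $\not\approx$-disagreement, so Proposition \ref{nolargefolds} must be invoked along a cofinal family of subarcs and the conclusions assembled in the limit; the $*$-symbols demand similar care, since a single $*$ in a relevant coordinate pins down a whole periodic block.
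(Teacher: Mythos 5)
First, a point of order: the paper does not prove this statement at all --- it is quoted as Proposition 2.7 of \cite{B2} and used as a black box, so there is no in-paper proof to measure your attempt against. Judged on its own, your sketch does assemble the correct raw materials: a coordinate of a path can pass between $1$ and $2$ only through $*$; a $*$ at position $m$ forces $\pi_m(\hat{z})=\tau$; acceptability and exact period $N$ give $\sigma^d\tau\approx\tau$ only when $N\mid d$; and Proposition \ref{nolargefolds} is the available fold-control lemma. These are the right ingredients, but in both directions the decisive step is asserted rather than carried out.

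In the forward direction, Proposition \ref{nolargefolds} controls the coordinates of interior points only \emph{below} the first $\not\approx$-disagreement of the endpoints of whatever subarc you apply it to. The conclusion you need --- that $\hat{x}$ and $\hat{y}$ follow the periodic word on the coordinates in $(-k_{i+1},-k_i)$ --- lives \emph{above} the $*$ of the relevant interior point $\hat{z}^{i+1}$ (the one with $\pi_{-k_{i+1}}(\hat{z}^{i+1})=\tau$), and the first disagreement between $\hat{x}$ and $\hat{z}^{i+1}$ may sit far below $-k_{i+1}$; as invoked, the proposition says nothing about the range in question, and a $*$ at $-k_i$ does not constrain $\hat{z}^i$ below $-k_i$ either. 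You need a dual lemma (control of an arc's coordinates above a level at which one endpoint carries a $*$, e.g.\ via the sets $T_M$), which you neither state nor prove. In the reverse direction the gap is larger: the unique arc joining two admissible sequences that differ at a single coordinate $j_0$ necessarily passes through the point whose itinerary from $j_0$ onward is $\tau$, so each connecting arc $B_n$ detours through a point of the form $(\text{tail})\,{*}\,\tau_1\tau_2\cdots$ anchored at $-k_{n+1}$. Whether $\overline{\bigcup_n A_n}$ is an arc terminating at $\hat{y}$, rather than a ray whose remainder is a nondegenerate continuum, depends entirely on showing that these detour points converge to $\hat{y}$ --- and that is precisely where the mod-$N$ congruence and the periodic-block hypothesis must be spent. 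You name this ``the main obstacle'' and then do not overcome it; a proof that ends by identifying its own missing step is a plan, not a proof. To complete the argument you would need (i) the dual fold-control lemma and (ii) the convergence estimate for the detour points, or else simply cite \cite{B2} as the paper does.
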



 
\section{Asymptotic Rays in Inverse Limits of Dendrites}

  Suppose $\hat{x}\in\hat{D}_{\tau}$. For each $N\in\mathbb{Z}$, let $T_{N}(\hat{x})=\{\hat{y}\in\hat{D}_{\tau}:y_i\approx x_i \text{ for all } i\leq N\}$, and let $T(\hat{x})=T_{-1}(\hat{x})$. If $e=e(\hat{x})$, we may write $T_N(e)$ in place of $T_N(\hat{x})$. It is easily verified that $T_N(\hat{x})$ is a closed subset of $\hat{D}_{\tau}$, and is homeomorphic to $\pi_N(T_N(\hat{x}))$.
  
 \begin{lemma}\cite[2.14]{B1}
Suppose $y\in D_{\tau}$ and $e$ is a backwards itinerary for some point in $\hat{D}_{\tau}$. Then there exists a unique $\hat{x}\in\hat{D}_{\tau}$ such that $\pi_0(\hat{x})= y$ and $\hat{x}\in T(e)$.
\end{lemma}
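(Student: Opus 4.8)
The plan is to prove existence and uniqueness of $\hat{x}$ separately; existence is a direct coordinate‑by‑coordinate construction, while uniqueness is the substantive half and hinges on the acceptability condition built into $\tau$‑admissibility.

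For \emph{existence}, I would build $\hat{x}$ one coordinate at a time. Put $\hat{x}_i=y_i$ for $i\geq 0$; then $\pi_n(\hat{x})=\sigma^n(y)\in D_{\tau}$ for every $n\geq 0$ because $D_{\tau}$ is invariant under the shift. For $i<0$ I would define $\hat{x}_i$ by downward recursion: assuming $\hat{x}_{i+1},\hat{x}_{i+2},\dots$ have been fixed so that $z:=\pi_{i+1}(\hat{x})\in D_{\tau}$, I must choose a symbol $\hat{x}_i$ with $\hat{x}_i\approx e_i$ and $\hat{x}_iz\in D_{\tau}$. The key observation is that for any $a\in\{*,1,2\}$ the conditions for $az$ to be $\tau$‑admissible reduce, at every position $m\geq 1$, to the already known $\tau$‑admissibility of $z$, so $az\in D_{\tau}$ can fail only at position $0$; and, using that a $\tau$‑admissible $z$ with $z\approx\sigma(\tau)$ must in fact equal $\sigma(\tau)$ (which follows from acceptability of $\tau$ together with $N\geq 2$), one checks that if $z=\sigma(\tau)$ the unique admissible choice is $a=*$, which is compatible with any $e_i$, whereas if $z\neq\sigma(\tau)$ then $1z,2z\in D_{\tau}$ but $*z\notin D_{\tau}$, so one takes $\hat{x}_i=e_i$ when $e_i\in\{1,2\}$ and either of $1,2$ when $e_i=*$. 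Hence a legal choice always exists, the resulting $\hat{x}$ satisfies $\pi_n(\hat{x})\in D_{\tau}$ for all $n\in\mathbb{Z}$, so $\hat{x}\in\hat{D}_{\tau}$, and by construction $\pi_0(\hat{x})=y$ and $\hat{x}\in T(e)$.

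For \emph{uniqueness}, suppose $\hat{x},\hat{x}'$ both satisfy the conclusion; they agree on coordinates $\geq 0$, and since $\pi_{-1}$ is a homeomorphism of $T(e)$ onto $\pi_{-1}(T(e))$ it suffices to show $\hat{x}_{-1}=\hat{x}'_{-1}$, and more generally I would show $\hat{x}_i=\hat{x}'_i$ for all $i$ by descent. Assume not, and let $k$ be the least index with $x_k\not\approx x'_k$. (The residual case $\hat{x}\approx\hat{x}'$, $\hat{x}\neq\hat{x}'$ is disposed of by the $*$‑rule: wherever $x_j=*\neq x'_j$ it forces $\pi_j(\hat{x})=\tau$, hence $\pi_j(\hat{x}')\approx\tau$, hence $\pi_j(\hat{x}')=\tau$ by the acceptability clause, contradicting $x'_j\neq *$.) Then $k\leq -1$, and since $x_k,x'_k\approx e_k$ are distinct and not $\approx$, necessarily $e_k=*$ and $\{x_k,x'_k\}=\{1,2\}$; moreover $e_k=*$ forces (for a $\hat{w}$ realizing $e$) $\pi_k(\hat{w})=\tau$, so $e_{k+1}\cdots e_{-1}=\tau_1\cdots\tau_{-k-1}$ and $\pi_0(\hat{w})$ is a forward shift of $\tau$. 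I would then observe that $\hat{x}$ and $\hat{x}'$ lie in a common arc (the discrepancy sequence of their backwards itineraries being controlled, via Proposition~\ref{arccharacterization}), and apply Proposition~\ref{nolargefolds} to that arc together with the acceptability clause of $\tau$‑admissibility: the block of $\tau$ that $e_k=*$ forces after position $k$, combined with one of the two choices $\{x_k,x'_k\}=\{1,2\}$, produces at some iterate $\pi_{k-m}(\hat{x})$ a shift that is $\approx\tau$ but $\neq\tau$ — a forbidden ``large fold'' — which is the contradiction.

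The main obstacle is exactly this last step. The existence construction genuinely leaves a binary choice at every coordinate $i$ with $e_i=*$ and $\pi_{i+1}(\hat{x})\neq\sigma(\tau)$, so uniqueness cannot follow from any finite‑stage forcing; one must show that all but one of the resulting infinite backward orbits are eventually killed — not at a bounded position but arbitrarily far down the backward tail — by the acceptability clause. Identifying precisely which finite window of $e$ around $k$ and which iterate $\pi_{k-m}(\hat{x})$ witnesses the fold, and checking that the surviving branch really does extend, is where the work lies; Proposition~\ref{nolargefolds} is the natural tool, once the two candidates are placed on a common arc.
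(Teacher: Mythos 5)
The paper offers no proof of this lemma (it is quoted verbatim from \cite[2.14]{B1}), so your argument has to stand on its own. The existence half does: the dichotomy you isolate --- for $\tau$-admissible $z$, the prefix $*z$ is admissible exactly when $z=\sigma(\tau)$, in which case neither $1z$ nor $2z$ is admissible, while if $z\neq\sigma(\tau)$ then $1z$ and $2z$ are admissible and $*z$ is not --- is correct, and the downward recursion built on it produces a point of $T(e)$ over $y$. The problem is the uniqueness half. The ``forbidden large fold'' you hope will kill the second branch at a discrepancy with $e_k=*$ does not exist, because uniqueness genuinely fails there. Take $\tau=\overline{*112}$, let $e=\ldots 111*$ (so $e_{-1}=*$ and $e_{-j}=1$ for $j\geq 2$; this is the backwards itinerary of $\hat{w}=\ldots 111*.112*112\ldots$), and let $y=\sigma^{3}(\tau)=2*112*112\ldots$. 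Then the two points
\[
\hat{x}=\ldots 111{*}11.2{*}112\ldots \qquad\text{and}\qquad \hat{x}'=\ldots 1112.2{*}112\ldots
\]
both lie in $\hat{D}_{\tau}$ (each $\pi_{-j}(\hat{x}')$ is of the form $1^{j-1}22*112\ldots$, which is never $\approx\tau$, while $\hat{x}$ is a shift of $\hat{w}$), both satisfy $\pi_0=y$, and both lie in $T(e)$ because $1\approx *$ and $2\approx *$ at coordinate $-1$ and $*\approx 1$ at coordinate $-3$. So the statement is false as literally printed; it needs the hypothesis $e_i\neq *$ for all $i$, which is how the lemma is actually used in this paper (see the second case of the proof of Lemma \ref{connected} and the hypothesis of Proposition \ref{adjacency}) and is presumably part of \cite[2.14]{B1}.

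Once that hypothesis is in place, the ``main obstacle'' you describe evaporates, and the machinery you reach for (placing $\hat{x},\hat{x}'$ on a common arc, Proposition \ref{nolargefolds}, killing branches arbitrarily far down the backward tail) is not needed. Your own dichotomy shows that for each $i<0$ there is \emph{exactly one} symbol $a$ with $a\approx e_i$ and $a\,\pi_{i+1}(\hat{x})\in D_{\tau}$: it is $*$ if $\pi_{i+1}(\hat{x})=\sigma(\tau)$ (and then $*\approx e_i$ automatically), and it is $e_i$ itself otherwise; the binary ambiguity between $1$ and $2$ arises only at coordinates with $e_i=*$, which is exactly the excluded case. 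Uniqueness is then a one-line downward induction: if $\hat{x}$ and $\hat{x}'$ agree on all coordinates greater than $i$, then $\pi_{i+1}(\hat{x})=\pi_{i+1}(\hat{x}')$ and both $i$-th coordinates are forced to the same value. Two smaller slips: the ``least index with $x_k\not\approx x'_k$'' need not exist (you want the greatest such index, i.e.\ the first discrepancy read leftward from the decimal point), and reducing to $\hat{x}_{-1}=\hat{x}'_{-1}$ via injectivity of $\pi_{-1}$ on $T(e)$ is circular, since that injectivity is essentially the uniqueness statement being proved.
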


 \begin{lemma}\label{connected}
 $T_{N}(e)$ is uniquely arc-wise connected.
 \end{lemma}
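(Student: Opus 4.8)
The plan is to reduce the statement to already-known facts about $D_\tau$ and $\hat{D}_\tau$. First I would establish arc-wise connectedness. Fix $\hat{x}, \hat{y} \in T_N(e)$. Passing through the homeomorphism $\pi_N$ (noted in the excerpt to be a homeomorphism onto $\pi_N(T_N(\hat{x}))$), or more directly using that $\hat{D}_\tau$ is a continuum containing arcs, I want to produce an arc in $\hat{D}_\tau$ joining $\hat{x}$ to $\hat{y}$ and then argue it stays inside $T_N(e)$. Since $T_N(e)$ is the set of points whose coordinates agree (up to $\approx$) with $e$ in all positions $\le N$, the containment is exactly a ``no large folds'' statement: by Proposition \ref{nolargefolds}, if $A$ is an arc in $\hat{D}_\tau$ with endpoints $\hat{x}$ and $\hat{y}$ and $k = \min\{i \mid x_i \not\approx y_i\}$, then every $\hat{z} \in A$ satisfies $z_i \approx x_i \approx y_i$ for all $i < k$. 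Since both $\hat{x}$ and $\hat{y}$ lie in $T_N(e)$, every coordinate $i \le N$ has $x_i \approx e_i$ and $y_i \approx e_i$; in particular $k > N$ whenever the $\le N$ coordinates of $\hat{x}$ and $\hat{y}$ are literally comparable, so the arc $A$ lands in $T_N(e)$. The one subtlety is the symbol $*$: two points of $T_N(e)$ could disagree strictly (a $1$ versus a $2$) at some coordinate $i \le N$ only if $e_i = *$ there; I need to check that acceptability of $\tau$ and $\tau$-admissibility force enough rigidity that this does not actually destroy the argument — either such a coordinate cannot occur for two distinct points of the tail class, or Proposition \ref{nolargefolds} still applies at the appropriate truncated index. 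I expect this bookkeeping around $*$ to be the main obstacle.

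Second, I would establish \emph{uniqueness} of the arc. Here the cleanest route is to invoke that $\hat{D}_\tau$ itself is uniquely arcwise connected: $D_\tau$ is a dendrite by \cite{B1}, and $\hat{D}_\tau$ (being conjugate to the inverse limit $(D_\tau,\sigma)$ via Proposition \cite[2.4]{B2}, an inverse limit of dendrites with monotone-on-pieces bonding maps) is uniquely arcwise connected — or at least, any two arcs in $\hat{D}_\tau$ with the same endpoints coincide. Granting that, the arc joining $\hat{x}$ to $\hat{y}$ is unique in $\hat{D}_\tau$, hence a fortiori unique in the subspace $T_N(e)$. If unique arcwise connectedness of $\hat{D}_\tau$ is not something I am willing to cite wholesale, the fallback is: $\pi_N$ maps $T_N(e)$ homeomorphically onto a subset of $D_\tau$, and an arc in $T_N(e)$ maps to an arc in $D_\tau$; since $D_\tau$ is a dendrite, the arc between the two image points is unique, and pulling back through the homeomorphism gives uniqueness upstairs.

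So the skeleton is: (1) given $\hat{x},\hat{y} \in T_N(e)$, take any arc $A \subseteq \hat{D}_\tau$ between them; (2) apply Proposition \ref{nolargefolds}, together with the fact that all coordinates $\le N$ of both endpoints are $\approx$-equal to $e$, to conclude $A \subseteq T_N(e)$, handling $*$-coordinates carefully; (3) conclude arc-wise connectedness; (4) deduce uniqueness either from unique arcwise connectedness of $\hat{D}_\tau$ or by transporting to the dendrite $D_\tau$ via the homeomorphism $\pi_N$. The delicate point throughout is the interaction of the wildcard symbol $*$ with the definition of $T_N(e)$ and with the index $k$ appearing in Proposition \ref{nolargefolds}; everything else is routine.
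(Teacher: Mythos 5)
Your step (1) --- ``take any arc $A \subseteq \hat{D}_{\tau}$ between $\hat{x}$ and $\hat{y}$'' --- is the genuine gap: the existence of such an arc is essentially the content of the lemma, and it does not come for free. The space $\hat{D}_{\tau}$ is \emph{not} arcwise connected (if it were, the entire discussion of arc-components, e.g.\ Proposition \ref{arccharacterization}, would be vacuous), so ``$\hat{D}_{\tau}$ is a continuum containing arcs'' does not let you join two arbitrary points of $T_N(e)$ by an arc upstairs. The paper's proof is devoted precisely to manufacturing this arc: it works downstairs in the dendrite $D_{\tau}$ (which \emph{is} arcwise connected), takes the arc $A$ there between $\pi_N(\hat{x})$ and $\pi_N(\hat{y})$, and then lifts each point $z\in A$ back into $T_N(e)$ using the unique-lift lemma \cite[2.14]{B1} (prepend the backwards itinerary $e$ to $z$), with Proposition \ref{nolargefolds} guaranteeing the lifted coordinates are consistent. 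Only then does $\pi_N^{-1}(A)$ become an arc in $T_N(e)$. You gesture at ``passing through the homeomorphism $\pi_N$'' but never carry out the lift, which is where the work is. (Your step could be salvaged by instead citing Proposition \ref{arccharacterization} to place $\hat{x}$ and $\hat{y}$ in the same arc-component --- their backwards itineraries have no discrepancies --- after which your Proposition \ref{nolargefolds} containment argument does correctly show the arc lies in $T_N(e)$; but that is a different, and heavier, route than the paper's, and you did not take it.)

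Two smaller points. The $*$-bookkeeping you flag as ``the main obstacle'' is dispatched in the paper in one line: if $e_i=*$ for some $i\le N$, admissibility forces $T_N(e)$ to be a single point, so one may assume $e_i\ne *$ for all $i\le N$ throughout; you should resolve this rather than leave it open. For uniqueness, your fallback (transport through the homeomorphism $\pi_N$ into the dendrite $D_{\tau}$, which is uniquely arcwise connected) is exactly the paper's argument and is fine; the first option you offer --- citing unique arcwise connectedness of $\hat{D}_{\tau}$ wholesale --- is not something established in the paper and should be avoided.
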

 \begin{proof}
 If $e_i=*$ for some $i\leq N$, then $T_N(e)$ consists of a single point. Suppose $e_i\neq *$ for all $i\leq N$. Let $\hat{x}$ and $\hat{y}$ be distinct elements of $T_N(e)$. Since $D_{\tau}$ is arc-wise connected, there exists an arc $A$ in $D_{\tau}$ having $\pi_N(\hat{x})$ and $\pi_N(\hat{y})$ as endpoints. Let $k=\min\{i>N:x_i\neq y_i\}$ and let $z\in A$. By admissibility of $\pi_N(\hat{x})$ and $\pi_N(\hat{y})$, $k$ is finite. By Proposition \ref{nolargefolds}, if $N\leq i < k$, we have $x_i\approx z_{i-N+1} \approx y_i$. For $i\geq N$, let $p'_i=z_{i-N+1}$, and for $i<N$, let $p'_i=e_i$. By the previous lemma, there exists a unique $\hat{p}\in\hat{D}_{\tau}$ with $\hat{p}\approx p'$. Moreover, $\pi_N(\hat{p}) =z$. Hence, we have $\pi_N(T(e)) \supseteq A$. Since $\pi_N$ is a homeomorphism from $T_N(e)$ onto its image, $\pi_N^{-1}(A)$ is a subarc of $T_N(e)$, with endpoints $\hat{x}$ and $\hat{y}$. Furthermore, since $D_{\tau}$ is uniquely arc-wise connected, $T_N(e)$ is as well. 
\end{proof}
 
\begin{proposition}
 $T_N(e)$ is a subcontinuum of $\hat{D}_{\tau}$.
 \end{proposition}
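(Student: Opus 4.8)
The plan is to read the proposition off from facts already in hand, since a subcontinuum of $\hat{D}_{\tau}$ is precisely a nonempty compact connected subset and $\hat{D}_{\tau}$ is itself a (metrizable) continuum. Two ingredients are already available: the observation made at the start of this section, that $T_N(e)$ is a closed subset of $\hat{D}_{\tau}$, and Lemma \ref{connected}, that $T_N(e)$ is uniquely arc-wise connected. So the argument reduces to assembling nonemptiness, connectedness, and compactness.

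First I would note that $T_N(e)$ is nonempty: by hypothesis $e=e(\hat{x})$ for some $\hat{x}\in\hat{D}_{\tau}$, and that $\hat{x}$ lies in $T_N(e)$ by definition (alternatively, the lemma immediately preceding Lemma \ref{connected} produces points of $T(e)\subseteq T_N(e)$ lying over any $y\in D_{\tau}$). Connectedness is then immediate from Lemma \ref{connected}, since a uniquely arc-wise connected set is in particular arc-wise connected, hence connected. For compactness, I would observe that $\{*,1,2\}$ is finite, hence compact, so $\{*,1,2\}^{\mathbb{Z}}$ is compact by Tychonoff; since $D_{\tau}$ is a dendrite it is closed in $\{*,1,2\}^{\mathbb{N}}$, whence $\hat{D}_{\tau}=\bigcap_{n}\pi_n^{-1}(D_{\tau})$ is closed in $\{*,1,2\}^{\mathbb{Z}}$ and therefore compact (equivalently, by Proposition 2.4, $\hat{D}_{\tau}$ is conjugate to the inverse limit of the dendrite $D_{\tau}$, a metrizable continuum). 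As $T_N(e)$ is closed in the compact space $\hat{D}_{\tau}$, it is compact. Combining, $T_N(e)$ is a nonempty, compact, connected subset of $\hat{D}_{\tau}$, i.e. a subcontinuum.

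I do not expect a genuine obstacle here: the substantive content is carried entirely by Lemma \ref{connected} together with the already-recorded closedness of $T_N(e)$, and the only step requiring a word of justification is the compactness of $\hat{D}_{\tau}$, which is routine. If full self-containment were desired, one could instead exhibit $T_N(e)$ directly as the intersection of $\hat{D}_{\tau}$ with the closed sets $\{\hat{y}:y_i\approx x_i\}$ for $i\le N$ and invoke compactness of the ambient product, but this adds nothing of substance.
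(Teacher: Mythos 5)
Your argument is correct and follows essentially the same route as the paper: the paper likewise combines the closedness of $T_N(e)$ in the compact space $\hat{D}_{\tau}$ with Lemma \ref{connected} to conclude. The extra details you supply (nonemptiness and the Tychonoff argument for compactness of $\hat{D}_{\tau}$) are routine points the paper leaves implicit.
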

 \begin{proof}
 Since $T_N(e)$ is a closed subset of the compact space $\hat{D}_{\tau}$, we have that $T_N(e)$ is compact. This, coupled with the previous proposition, yields the desired result.
 \end{proof}
 
If $e$ and $\widetilde{e}$ are backwards itineraries occurring in $\hat{D}_{\tau}$, we seek a way to determine when $T(e)$ and $T(\widetilde{e})$ are share a common boundary point. To that end, we make the following definition: for each $0\leq i < N$, let $\beta^{i}(e)=$ $\max\{k: e_{-k}e_{-(k-1)}\ldots e_{-1}\approx \tau_{0}\tau_{1}\ldots \tau_{k-1}\text{ and } k \equiv i \mod N\}$. If no such match exists, we will leave $\beta^{i}(e)$ undefined. 

\begin{proposition}
Suppose $\tau$ is of period $N$, and $e$ is a backwards itinerary with $e_{-k}=*$ for some positive integer $k$. Then, there exists a unique $0\leq i < N$ such that $\beta^{i}(e)$ is defined. In particular, $i\equiv k\mod N$.
\end{proposition}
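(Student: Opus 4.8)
The plan is to exploit the rigidity that a $*$ forces on a backwards itinerary, and then to feed any competing match into the acceptability of $\tau$.

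\emph{Existence and the value of $i$.}
Write $e=e(\hat{x})$ for some $\hat{x}\in\hat{D}_{\tau}$. Since $\hat{x}_{-k}=e_{-k}=*$, the sequence $\pi_{-k}(\hat{x})\in D_{\tau}$ has a $*$ in its zeroth coordinate, so $\tau$-admissibility forces $\pi_{-k}(\hat{x})=\tau$. Reading coordinates, $e_{-k}e_{-(k-1)}\cdots e_{-1}=\tau_{0}\tau_{1}\cdots\tau_{k-1}$, an exact match of length $k$; hence $\beta^{i}(e)$ is defined (indeed $\geq k$) for the $i\in\{0,\dots,N-1\}$ with $i\equiv k\bmod N$, which gives existence and the ``in particular'' clause. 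If $e$ also has a $*$ at position $-k'$, the same argument applied there yields $\hat{x}_{-k}=\tau_{k'-k}$, and $\hat{x}_{-k}=*$ then forces $\tau_{k'-k}=*$, i.e.\ $N\mid k'-k$; so the residue selected by the $*$'s of $e$ is unambiguous.

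\emph{Uniqueness.}
Suppose $\beta^{j}(e)$ is defined, witnessed by a length $k'\equiv j\bmod N$ with $e_{-k'}e_{-(k'-1)}\cdots e_{-1}\approx\tau_{0}\cdots\tau_{k'-1}$; I must show $j=i$. Intersect the two windows $\{-k,\dots,-1\}$ and $\{-k',\dots,-1\}$, write down the length-$k'$ match on that intersection, and substitute the known values $e_{-m}=\tau_{k-m}$: the $e$-coordinates cancel and leave a statement internal to $\tau$, namely $(\sigma^{d}\tau)_{l}\approx\tau_{l}$ for $0\le l<\min(k,k')$, where $d=|k-k'|$. If that common window has length at least $N$, then, $\tau$ (hence $\sigma^{d}\tau$) being $N$-periodic, the agreement holds for all $l$, so $\sigma^{d}(\tau)\approx\tau$; acceptability of $\tau$ then upgrades this to $\sigma^{d}(\tau)=\tau$, whence $N\mid d$ and $j=i$.

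\emph{Where the work is.}
The delicate point is the length bookkeeping just used: the reduction needs the common window to be at least $N$ long before periodicity and acceptability can be invoked, whereas a priori a match witnessing $\beta^{j}(e)$ could be shorter than $N$, and such short matches can arise spuriously from the $*$'s that $\tau_{0}\tau_{1}\cdots$ already carries. Handling this is where care is required: one wants to show that a match of residue $j$ can always be realized at a length $\geq N$ congruent to $j$ (using that a length-$\ell$ match produces a length-$(\ell-N)$ one by periodicity, and that the rigid block already supplies a long match on the $k$-side), and, failing that, to analyze directly how a match of length $<N$ lies against $\tau_{0}\cdots\tau_{k-1}$, invoking acceptability in the form that $\sigma^{n}(\tau)\not\approx\tau$ for $0<n<N$. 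With the common window genuinely of length $\geq N$, the remainder is routine.
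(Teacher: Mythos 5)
The paper's own ``proof'' of this proposition is the single sentence ``This follows easily from the definition of admissibility,'' so there is no detailed argument to measure you against; your existence half is surely the intended content and it is correct: $e_{-k}=*$ forces $\pi_{-k}(\hat{x})=\tau$, hence $e_{-k}\cdots e_{-1}=\tau_0\cdots\tau_{k-1}$, a match of length $k$ in the residue class of $k$, and your observation that any two $*$'s in $e$ must lie in the same residue class mod $N$ is also right.

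The uniqueness half, however, is not a proof, and the gap you flag at the end cannot be closed by the repairs you propose. Under the definition of $\beta^{i}(e)$ as displayed in the paper, a witness is only required to satisfy $e_{-k'}\cdots e_{-1}\approx\tau_0\cdots\tau_{k'-1}$, and short matches in \emph{every} residue class can occur, precisely because $\tau_0=*$ matches any symbol and the $*$ at $e_{-k}$ matches any symbol of $\tau$. Concretely, take $\tau=\overline{*112}$ (so $N=4$) and $e=1^{\infty}*$, a legitimate backwards itinerary by the paper's own example, with $k=1$: then $e_{-1}=*\approx\tau_0$, $e_{-2}e_{-1}=1*\approx *1$, $e_{-3}e_{-2}e_{-1}=11*\approx *11$, and $e_{-4}\cdots e_{-1}=111*\approx *112$, so the defining sets for all four residues are nonempty. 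In particular a match witnessing a residue $j\not\equiv k$ need not extend to one of length at least $N$, so the plan of ``realizing every match at length $\geq N$ and then invoking periodicity plus acceptability'' fails at its first step; your reduction to $(\sigma^{d}\tau)_l\approx\tau_l$ for $l<\min(k,k')$ is correct, but for $\min(k,k')<N$ it is simply a true statement from which no contradiction can be extracted. To prove the proposition one must read $\beta^{i}$ more restrictively than the displayed formula --- for instance as recording the depths at which a point of $\hat{D}_{\tau}$ carrying a $*$ in that coordinate can actually lie in $T(e)$, so that admissibility of the candidate boundary point, rather than the raw $\approx$-match, carries the argument. As stated, uniqueness does not follow from the matching condition alone.
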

\begin{proof}
This follows easily from the definition of admissiblity.
\end{proof}

 \begin{proposition}\label{adjacency}
 Suppose $e$ is a backwards itinerary, with $e_i\neq *$ for all $i$ and $\tau$ is of period $N$. Suppose $\beta^{k}(e)$ is defined. Define $\widetilde{e}$ by setting $\widetilde{e}_i$ to be either 1 or 2 when $\beta^{k} \leq i < 0 \text{ and } i\equiv k \mod N$, and $\widetilde{e}_i=e_i$ otherwise. If $e\neq \widetilde{e}$, then $T(e)\cap T(\widetilde{e})$ consists of a single point.
 \end{proposition}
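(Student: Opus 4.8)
The plan is to convert the statement into a combinatorial assertion about one‑sided sequences and then exploit the maximality built into $\beta^k(e)$. The first observation is that neither $e$ nor $\widetilde{e}$ contains $*$, so a point $\hat{y}\in\hat{D}_\tau$ lies in $T(e)\cap T(\widetilde{e})$ exactly when $y_i\approx e_i$ and $y_i\approx\widetilde{e}_i$ for all $i\le-1$: at each coordinate $-m$ where $e_{-m}\ne\widetilde{e}_{-m}$ this forces $y_{-m}=*$ (since $\{e_{-m},\widetilde{e}_{-m}\}=\{1,2\}$), while at every other coordinate $i\le-1$ it forces $y_i\in\{e_i,*\}$. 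By the form of $\widetilde{e}$, the coordinates at which it differs from $e$ occur at depths $m$ with $1\le m\le\beta^k(e)$ and $m\equiv\beta^k(e)\pmod N$ --- precisely the depths at which the symbol of $\tau$ matched by $e_{-m}$ in the block $e_{-\beta^k(e)}\cdots e_{-1}\approx\tau_0\cdots\tau_{\beta^k(e)-1}$ is $*$ --- and since $e\ne\widetilde{e}$ there is at least one. I will also use throughout that $\tau$‑admissibility forces $\pi_{-m}(\hat{y})=\tau$ whenever $y_{-m}=*$, and that every $\sigma^j(\tau)=\imath(f^j(t))$ lies in $\imath(D)\subseteq D_\tau$.

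The key step is to prove: if $\hat{y}\in T(e)$ and $\pi_0(\hat{y})=\sigma^m(\tau)$ with $1\le m\le\beta^k(e)$ and $m\equiv\beta^k(e)\pmod N$, then $\pi_{-\beta^k(e)}(\hat{y})=\tau$ and $y_i=e_i$ for all $i<-\beta^k(e)$; in particular such a $\hat{y}$ is unique. For the first half I would argue that periodicity of $\tau$ gives $\pi_0(\hat{y})=\sigma^{\beta^k(e)}(\tau)$, so $\pi_{-\beta^k(e)}(\hat{y})$ agrees with $\sigma^{\beta^k(e)}(\tau)$ on $[0,\infty)$; and on $[-\beta^k(e),-1]$ each entry $y_{-\beta^k(e)+r}$ is either $*$, which is $\approx\tau_r$, or equals $e_{-\beta^k(e)+r}$, which is $\approx\tau_r$ by the match defining $\beta^k(e)$ --- a case split is needed here because ``$\approx$'' is not transitive. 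Hence $\pi_{-\beta^k(e)}(\hat{y})\approx\tau$, which admissibility promotes to $\pi_{-\beta^k(e)}(\hat{y})=\tau$; in particular $y_{-\beta^k(e)}=*$. For the second half, suppose $y_{-m_*}=*$ for some $m_*>\beta^k(e)$; then $\pi_{-m_*}(\hat{y})=\tau$, so comparing the entry at coordinate $-\beta^k(e)$ with $y_{-\beta^k(e)}=*$ forces $m_*\equiv\beta^k(e)\pmod N$, and the block of $\hat{y}$ on $[-m_*,-1]$ equals $\tau_0\cdots\tau_{m_*-1}$ while satisfying $y_i\approx e_i$ there, so $e_{-m_*}\cdots e_{-1}\approx\tau_0\cdots\tau_{m_*-1}$, contradicting the maximality of $\beta^k(e)$. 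Thus $\hat{y}$ has no $*$ below $-\beta^k(e)$, and there $y_i\approx e_i$ with both symbols in $\{1,2\}$ gives $y_i=e_i$; the sequence is then completely determined.

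Given this, the proposition follows quickly. For at most one intersection point: any $\hat{y}\in T(e)\cap T(\widetilde{e})$ has $y_{-m_0}=*$ at the deepest coordinate $-m_0$ where $e$ and $\widetilde{e}$ differ, so $\pi_{-m_0}(\hat{y})=\tau$ and $\pi_0(\hat{y})=\sigma^{m_0}(\tau)$ with $1\le m_0\le\beta^k(e)$ and $m_0\equiv\beta^k(e)\pmod N$, and the key step pins $\hat{y}$ down. For nonemptiness: $u:=\sigma^{\beta^k(e)}(\tau)$ lies in $D_\tau$, so the lemma recalled above (\cite[2.14]{B1}) gives a unique $\hat{y}^{0}\in T(e)$ with $\pi_0(\hat{y}^{0})=u$; the key step with $m=\beta^k(e)$ identifies $\hat{y}^{0}$ as the determined sequence, so $\pi_{-\beta^k(e)}(\hat{y}^{0})=\tau$. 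Finally $\hat{y}^{0}\in T(\widetilde{e})$, since at a coordinate $-m$ where $\widetilde{e}_{-m}\ne e_{-m}$ we have $m\equiv\beta^k(e)\pmod N$, so $\pi_{-\beta^k(e)}(\hat{y}^{0})=\tau$ forces $y^{0}_{-m}=*\approx\widetilde{e}_{-m}$, while at every other $i\le-1$ we already have $y^{0}_i\approx e_i=\widetilde{e}_i$. Hence $T(e)\cap T(\widetilde{e})=\{\hat{y}^{0}\}$.

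The part I expect to be the main obstacle is the ``no $*$ below $-\beta^k(e)$'' half of the key step: this is the only place the maximality of $\beta^k(e)$ enters, and making it precise requires selecting the offending coordinate correctly, invoking the period of $\tau$ to fix its residue mod $N$, and converting ``$\hat{y}$ carries a long prefix of $\tau$ that matches $e$ away from its critical coordinates'' into ``$e$ itself matches a too‑long prefix of $\tau$''. Two minor points also deserve attention: the non‑transitivity of ``$\approx$'', which dictates the coordinatewise case analysis, and the fact that genuinely degenerate $\tau$ (e.g. constant between consecutive $*$'s, or of period $\le2$) cannot arise, since such $\tau$ fail to be acceptable.
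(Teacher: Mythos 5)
Your argument for the case $\beta^k(e)<\infty$ is correct and is essentially the paper's proof, organized a bit more cleanly. Both proofs identify the candidate intersection point as the unique $\hat{p}\in T(e)$ with $\pi_{-\beta^k(e)}(\hat{p})=\tau$ and $\hat{p}_i=e_i$ below $-\beta^k(e)$; the paper constructs $\hat{p}$ by hand and checks membership in $T(e)$ by a closure argument, while you obtain it as the lift of $\sigma^{\beta^k(e)}(\tau)$ via Lemma 2.14 and verify membership coordinatewise --- a harmless variant. For uniqueness both arguments force a $*$ at a discrepancy coordinate, promote $\approx\tau$ to $=\tau$ by admissibility to get $\hat{q}_{-\beta^k(e)}=*$, and use maximality of $\beta^k(e)$ to exclude any $*$ below $-\beta^k(e)$; your write-up actually supplies the detail behind the paper's final assertion ``$\hat{q}_{-\beta}=*$, which implies $\hat{q}=\hat{p}$,'' and your handling of the non-transitivity of $\approx$ is careful and correct.

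The genuine omission is the case $\beta^k(e)=\infty$. The hypothesis ``$\beta^k(e)$ is defined'' is intended to include this case --- the paper's proof explicitly splits it off (``The proof for when $\beta=\infty$ is similar''), and the example immediately following the proposition discusses $\beta^k(e)=\infty$, where infinitely many continua $T(e^n)$ share one boundary point. Your machinery does not transfer verbatim there: there is no maximality to contradict, no anchor coordinate $-\beta^k(e)$ at which to place the forced $*$, and if $\widetilde{e}$ differs from $e$ at infinitely many coordinates there is no ``deepest discrepancy'' to start from. The case is not hard --- any single discrepancy at depth $m_0$ forces $\pi_{-m_0}(\hat{y})=\tau$, and then the unboundedness of the matches $e_{-m}\ldots e_{-1}\approx\tau_0\ldots\tau_{m-1}$ together with the admissibility rule ``$\approx\tau$ implies $=\tau$'' propagates $*$'s downward through every depth congruent to $k$, pinning $\hat{y}$ to the bi-infinite $\tau$-periodic point --- but it needs to be stated, since the structure of the unique intersection point is genuinely different from the finite case.
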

 \begin{proof}
 Suppose $\beta=\beta^k(e)$ is finite. Define $\hat{p}=\ldots p_{-2}p_{-1}p_0p_1\ldots$ by setting

 \begin{displaymath}
  {p}_i = \left\{
     \begin{array}{lr}
       e_i \text{ if } i < -\beta \\
       \tau_{k+i} \text{ if } i\geq -\beta
     \end{array}
   \right.
\end{displaymath}
 Note that $\pi_{-\beta}(\hat{p})=\tau$. Hence, if $i\geq -\beta$, $\pi_{i}(\hat{p})\in D_{\tau}$. Moreover, if $i<-\beta$, then $\pi_{i}(\hat{p})\not\approx\tau $, since $k$ was chosen maximally modulo $N$. Thus, $\hat{p}\in \hat{D}_{\tau}$.

Now, we show $\hat{p}\in T(e)$. Let $O=\ldots O_{-1}\times O_0\times O_1 \ldots$ be a basic open set containing $\hat{p}$. Pick $n\in\mathbb{N}$ so that whenever $|i|>n$ we have $O_i=\{*,1,2\}$. Define $\hat{x}$ by setting

\begin{displaymath}
   \hat{x}_i = \left\{
     \begin{array}{lr}
       e_i \text{ if } i < 0\\
       p_i \text{ if } 0\leq i < n \text{ and } p_i\neq *\\
       2 \text{ otherwise }
     \end{array}
   \right.
\end{displaymath}

Then we clearly have $\hat{x}\in O \cap T(e)$. Since $T(e)$ is closed, it follows that $\hat{p}\in T(e)$. That $\hat{p}\in T(\widetilde{e})$ follows similarly.

It remains to show that $T(e)\cap T(\widetilde{e})\subseteq\{\hat{p}\}$. Suppose $\hat{q}\in T(e)\cap T(\widetilde{e})$. Pick $j$ minimally so that $e_{j}\neq \widetilde{e}_j$, and note that $j\equiv \beta\mod N$. If $\hat{q}_j\neq *$, we may construct an open set containing $\hat{q}$, and missing one of either $T(e)$ or $T(\widetilde{e})$. Hence, $\hat{q}_j=*$. If $\hat{q}_{-\beta}\neq *$, then $\pi_{-\beta}(\hat{q})\approx \tau$, but $\pi_{-\beta}(\hat{q})\neq \tau$, contradicting admissibility. Hence, $\hat{q}_{-\beta}= *$, which implies $\hat{q}=\hat{p}$. 

The proof for when $\beta=\infty$ is similar.
\end{proof}

\begin{corollary}\label{project}
Suppose $\tau$ is of period $N$, and let $e$ be a backwards itinerary occurring in $\hat{D}_{\tau}$. Then \bwip $\supseteq [\{\sigma^i(\tau)|\beta^{i}(e) \text{ is defined}\}]$.
\end{corollary}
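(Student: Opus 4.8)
The plan is to prove two things about $\pi_0T(e)$: that it is connected, and that it contains $\sigma^i(\tau)$ for every $i$ with $\beta^i(e)$ defined. Since $[S]$ is by definition the smallest connected subset of $D_\tau$ containing $S$ (and, in a dendrite, it is contained in every connected set that contains $S$), these two facts force $\pi_0T(e)\supseteq[\{\sigma^i(\tau):\beta^i(e)\text{ defined}\}]$, which is the assertion. Connectedness is immediate from what precedes: $T(e)=T_{-1}(e)$ is uniquely arc-wise connected by Lemma \ref{connected}, and $\pi_0$ is continuous on $\hat D_\tau$, so $\pi_0T(e)$ is connected. Thus the whole content is to produce, for each relevant $i$, a point of $T(e)$ whose $\pi_0$-image is $\sigma^i(\tau)$.

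For this I would split on whether the symbol $*$ occurs in $e$. If $e_{-k}=*$ for some $k>0$, write $e=e(\hat x)$ with $\hat x\in\hat D_\tau$; then $\pi_{-k}(\hat x)\in D_\tau$ has $*$ as its initial symbol, so admissibility forces $\pi_{-k}(\hat x)=\tau$, whence $\pi_0(\hat x)=\sigma^k(\tau)$. Since $\hat x\in T(e)$ trivially, $\sigma^k(\tau)\in\pi_0T(e)$; and by the Proposition immediately preceding Proposition \ref{adjacency} the only $i$ with $\beta^i(e)$ defined satisfies $i\equiv k\bmod N$, so the set $\{\sigma^i(\tau):\beta^i(e)\text{ defined}\}$ is just $\{\sigma^k(\tau)\}$ and we are done in this case. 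If instead $e$ contains no $*$, fix $i$ with $\beta:=\beta^i(e)$ defined. For finite $\beta$, take $\hat p$ to be exactly the point constructed in the proof of Proposition \ref{adjacency} ($\hat p_m=e_m$ for $m<-\beta$ and $\hat p_m=\tau_{m+\beta}$ for $m\geq-\beta$): that proof already verifies $\hat p\in\hat D_\tau$ and $\hat p\in T(e)$, and $\pi_{-\beta}(\hat p)=\tau$ yields $\pi_0(\hat p)=\sigma^\beta(\tau)=\sigma^i(\tau)$, using the $N$-periodicity of $\tau$ and $\beta\equiv i\bmod N$. For $\beta=\infty$, take instead the bi-infinite $N$-periodic sequence with $\pi_0$-value $\sigma^i(\tau)$, and check, as in the $\beta=\infty$ case of that proof, that it lies in $T(e)$. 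Either way $\sigma^i(\tau)\in\pi_0T(e)$ for every $i$ with $\beta^i(e)$ defined.

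Assembling these pieces completes the proof. The only step requiring attention — the hard part, such as it is — is the reuse of Proposition \ref{adjacency}: one must observe that the verifications ``$\hat p\in\hat D_\tau$'' and ``$\hat p\in T(e)$'' in its proof are carried out before, and do not invoke, the hypothesis $e\neq\widetilde e$, so they transfer here verbatim; similarly the ``$e$ contains $*$'' and $\beta=\infty$ situations are genuine but routine special cases. Beyond this bookkeeping I do not anticipate any essential difficulty, since the substantive work already lives in Lemma \ref{connected} and Proposition \ref{adjacency}, and this corollary is in effect their assembly together with the defining property of $[\,\cdot\,]$.
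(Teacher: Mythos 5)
Your proof is correct and follows essentially the same route as the paper's: the paper likewise invokes Proposition \ref{adjacency} to obtain, for each defined $\beta^i(e)$, a boundary point of $T(e)$ projecting to $\sigma^i(\tau)$, and then combines connectedness of $\pi_0T(e)$ with the minimality of $[\,\cdot\,]$. Your version merely spells out details the paper leaves implicit (the cases where $e$ contains a $*$ or $\beta=\infty$, and the observation that the relevant verifications in Proposition \ref{adjacency} do not use $e\neq\widetilde e$).
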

\begin{proof}
By Lemma \ref{adjacency}, whenever $\beta^{i}(e)$ is defined, it corresponds to a boundary point of $T(e)$ which projects to $\sigma^i(\tau)$. \bwip is connected, and $[\{\sigma^i(\tau)|\beta^{i} \text{ is defined}\}]$ is the smallest connected subset of $D_{\tau}$ containing the $\sigma^i(\tau)$'s. 
\end{proof}

\begin{example}
Let $\tau = \overline{*112}$ and $e=1^{\infty}$. Then the point $\hat{p}_1$ with backwards itinerary $1^{\infty}*$ is a boundary point for $T(e)$, and is adjacent to the continuum $T(e_1)$, where $e_1=1^{\infty}2$. Moreover, $T(e)$ also shares boundaries with $T(e_2)$ and $T(e_3)$, where $e_2=1^{\infty}21$ and $e_3=1^{\infty}211$. Hence, it follows that $T(e)$ contains a branch point (i.e., a point $\hat{p}$ so that $T(e)-\{\hat{p}\}$ consists of more than two components). This corresponds to the \emph{central branching point} of $D$: $1^{\infty}$(cf \cite{B1}, Theorem 1.22 and Definition 1.23).

In general, a boundary point of $T(e)$ may also be a branch point. For example, again let $\tau = \overline{*112}$, and define $\hat{p}$ to be the point with backwards itinerary $1^{\infty}*112*112$. Then $\hat{p} \in T(e_1) \cap T(e_2) \cap T(e_3)$, where  $e_1=1^{\infty}21112$, $e_2=1^{\infty}21121112$, and $e_3=1^{\infty}22112$. And, if $\beta^k(e)=\infty$ for some $k$, there exists an infinite collection of backwards itineraries, whose corresponding continua share a common boundary point. For example, for each $n$, let $e^n=(1112)^{\infty}(2112)^n$. Then $(*112)^{\infty}.(*112)^{\infty}\in \cap_{n=1}^{\infty}T(e^n)$.  

\end{example}

\begin{lemma}\label{threebwi}
Suppose $A=[\hat{x},\hat{y}]$ is an arc in $\hat{D}_{\tau}$, where $\tau$ is of period $N$. If $\{\hat{z}^i\}$ is a sequence of points of $A$ converging to $\hat{y}$, then there exists an integer $M$ so that if $i,j\geq M$, we have $e(\hat{z}^i)=e(\hat{z}^j)$. 
\end{lemma}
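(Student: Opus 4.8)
The plan is to combine three facts: that $A$, being an arc, is the homeomorphic image of $[0,1]$, so the subarcs running from the $\hat z^i$ to $\hat y$ actually shrink to $\hat y$; the rigidity that admissibility forces on the symbol $*$; and the ``no large folds'' property of Proposition~\ref{nolargefolds}. Fix a homeomorphism $\gamma\colon[0,1]\to A$ with $\gamma(1)=\hat y$ and put $t_i=\gamma^{-1}(\hat z^i)$. Continuity of $\gamma^{-1}$ gives $t_i\to 1$, and uniform continuity of $\gamma$ then shows that $[\hat z^i,\hat y]=\gamma([t_i,1])$, and more generally $[\hat z^i,\hat z^j]$ once $\min(i,j)$ is large, have diameter tending to $0$. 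Assuming (as we may) that $\hat z^i\neq\hat y$ for all $i$, it suffices to derive a contradiction from the supposition that $e(\hat z^i)$ is not eventually constant.

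First I would record what convergence gives: for every $R$ there is $I_R$ so that, for $i\ge I_R$, $(\hat z^i)_m=\hat y_m$ whenever $|m|\le R$ and $\hat y_m\neq *$, since $\{1\}$ and $\{2\}$ are open. Hence, within the window $[-R,-1]$, the sequences $e(\hat z^i)$ and $e(\hat y)$ can disagree only at positions where $\hat y_m=*$. Next I would use admissibility: if a point of $\hat D_\tau$ has $m$-th coordinate $*$, then $\pi_m=\tau$, so all later coordinates are forced and the negative $*$-positions of that point lie in a single residue class mod $N$ down to the deepest one. Two consequences: (i) a backwards itinerary carrying a $*$ at a negative position is realized by exactly one point of $\hat D_\tau$, so such an itinerary cannot occur among the $e(\hat z^i)$ infinitely often (that one point would have to be $\hat y$); and (ii) if $\hat y$ has only finitely many negative $*$'s, the deepest at $-m_0$, then for $i\ge I_{m_0}$ the sequence $e(\hat z^i)$ agrees with $e(\hat y)$ at every position below $-m_0$, so only finitely many backwards itineraries occur, each free of negative $*$'s by (i).

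To finish under the hypothesis of (ii): for each of the finitely many $*$-free backwards itineraries $g$ that points of $A$ near $\hat y$ can have, the corresponding set $T(g)$ is closed and (using (i) and the $*$-rigidity again, via Proposition~\ref{adjacency}) a point of $A$ close enough to $\hat y$ lies in at most one of them; moreover $A$ near $\hat y$ is covered by them. Connectedness of $\gamma((1-\delta,1))$ then forces $A$, near $\hat y$, into a single $T(g_0)$, so $\hat z^i\in T(g_0)$ for all large $i$; since a point whose backwards itinerary is exactly $g\neq g_0$ cannot lie in $T(g_0)$ (they would have to be $\approx$ at a coordinate where both are a definite, unequal symbol), we get $e(\hat z^i)=g_0$ for all large $i$ — contradiction. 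The remaining case, $\hat y$ having infinitely many negative $*$'s, forces $\hat y$ to be a shift of the periodic point $\overline{\tau}$; I would dispatch it by the same rigidity argument together with Proposition~\ref{nolargefolds} applied to the shrinking subarcs $[\hat z^i,\hat y]$, which prevents the first backward coordinate at which $\hat z^i$ fails to be $\approx$ to $\hat y$ from being pushed past any fixed depth.

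The main obstacle, as this outline already makes clear, is controlling the depth of the discrepancies: mere convergence $\hat z^i\to\hat y$ is consistent with $e(\hat z^i)$ changing at deeper and deeper coordinates — a $*$-coordinate of $\hat y$ constrains nothing nearby, and the $T(e)$ need not be disjoint, as the preceding example shows — so the arc hypothesis is genuinely needed. The heart of the proof is that an arc passing through points deeply discrepant from $\hat y$ would have to run through a boundary point bounded away from $\hat y$, contradicting that $[\hat z^i,\hat y]$ shrinks to $\hat y$; making this quantitative through Propositions~\ref{nolargefolds} and~\ref{adjacency} is where I expect the care to be required.
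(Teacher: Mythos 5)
Your outline correctly locates the difficulty, but it does not actually prove the one step that carries all the weight, and in one place it asserts that step on false grounds. In your case (ii) you claim that, once $i\ge I_{m_0}$, convergence forces $e(\hat z^i)$ to agree with $e(\hat y)$ at every position below the deepest negative $*$ of $\hat y$, ``so only finitely many backwards itineraries occur.'' Convergence in the product topology controls only finitely many coordinates at a time: for a fixed $i$, nothing prevents $e(\hat z^i)$ from disagreeing with $e(\hat y)$ at some very deep coordinate while matching $\hat y$ on a large window. This is precisely the ``main obstacle'' you name in your closing paragraph, so the case-(ii) argument is circular --- the finiteness of the set of occurring backwards itineraries is essentially the content of the lemma, not a consequence of convergence. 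The case you defer with ``I would dispatch it by the same rigidity argument'' ($\hat y$ a shift of $\overline{\tau}.\overline{\tau}$) is exactly where the paper does all of its work; moreover the statement you propose to establish there --- that the first coordinate where $e(\hat z^i)\not\approx e(\hat y)$ cannot be pushed past a fixed depth --- is not sufficient: when $e(\hat y)$ contains infinitely many $*$'s, agreeing with $e(\hat y)$ up to $\approx$ at every deep coordinate still leaves infinitely many candidate backwards itineraries, one free choice at each $*$-position.

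For comparison, the paper argues as follows: assuming $e(\hat z^i)$ changes infinitely often, it extracts the transition points $\hat p^i$ between consecutive distinct backwards itineraries (each carries a negative $*$, at depths $k(i)$ which may be taken congruent mod $N$); these converge to $\hat y$, which forces $\hat y$ to be a shift of $\overline{\tau}.\overline{\tau}$, since otherwise an open set about $\hat y$ misses all but finitely many $\hat p^i$; Proposition \ref{arccharacterization} lines up the tails, and Proposition \ref{nolargefolds}, applied against a single fixed $\hat p^{i_0}$, bounds the depth of the genuine discrepancies, leaving only finitely many possible $e(\hat p^i)$; pigeonhole together with the uniqueness of the point realizing a $*$-carrying backwards itinerary then identifies two distinct points of the arc, a contradiction. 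Your proposal names all of these ingredients (boundary points with a $*$, Proposition \ref{nolargefolds}, uniqueness of $*$-carrying itineraries) but never assembles them into the decisive estimate; as written it is a plan for a proof rather than a proof.
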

\begin{proof}
Suppose $e(\hat{z}^i)\neq e(\hat{z}^j)$ infinitely often. Then, between each such pairing, there exists a $\hat{p}^i$ with $\hat{p}^i_{-k(i)}=*$ for some $k(i)\in\mathbb{N}$. By passing to a subsequence if necessary, we may assume each $k(i)$ is congruent modulo $N$. It follows that $\hat{y}$ is a shift of $(\tau_0\tau_1\ldots\tau_{N-1})^{\infty}.(\tau_0\tau_1\ldots\tau_{N-1})^{\infty}$, as otherwise we may construct an open set containing $\hat{y}$ and at most finitely many of the $\hat{p}^i$'s. By Proposition \ref{arccharacterization}, $e_*(\hat{p}^i)\approx e_*(\hat{y})$ for each $i$. Fix $i_0$, and pick $M$ so that whenever $j>M$, we have $\hat{p}^{i_0}_{-j}\approx\hat{y}_{-j}$. By Proposition \ref{nolargefolds}, whenever $j>M$ and $i\geq i_0$, we have $\hat{p}^{i}_{-j}\approx\hat{y}_{-j}$. This leaves only finitely many options for $e(\hat{p}^i)$ when $i\geq i_0$. Hence, there exists an $i_1$ so that whenever $i,j\geq i_1$, we have $e(\hat{p}^i)=e(\hat{p}^j)$. By admissibility, this implies $\hat{p}^i=\hat{p}^j$ whenever $i,j\geq i_1$, providing a contradiction.
\end{proof}

\begin{proposition}\label{arcbwi}
Suppose $A=[\hat{x},\hat{y}]$ is an arc in $\hat{D}_{\tau}$, where $\tau$ is of period $N$. Then there exists finitely many backwards itineraries occuring on $A$. 
\end{proposition}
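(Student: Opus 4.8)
The plan is to argue by contradiction, using the compactness of $A$ together with Lemma~\ref{threebwi}. Suppose infinitely many distinct backwards itineraries occur on $A$, and choose points $\hat{z}^1,\hat{z}^2,\ldots$ of $A$ with $e(\hat{z}^i)\neq e(\hat{z}^j)$ whenever $i\neq j$. Since $A$ is an arc, it is compact and metrizable, so after passing to a subsequence we may assume $\hat{z}^i\to\hat{w}$ for some $\hat{w}\in A$; and because the $\hat{z}^i$ are pairwise distinct, we may further assume $\hat{z}^i\neq\hat{w}$ for every $i$.

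Next I would arrange the hypothesis of Lemma~\ref{threebwi}, which is stated only for sequences converging to an \emph{endpoint} of the arc in question. The point $\hat{w}$ separates $A$ into the two subarcs $[\hat{x},\hat{w}]$ and $[\hat{w},\hat{y}]$ (one of these degenerates to a point if $\hat{w}$ is an endpoint of $A$). By the pigeonhole principle, one of these subarcs, call it $B$, contains infinitely many of the $\hat{z}^i$; discarding the rest, we obtain a sequence of points of $B$ converging to the endpoint $\hat{w}$ of $B$. Applying Lemma~\ref{threebwi} to the arc $B$ and this sequence yields an integer $M$ with $e(\hat{z}^i)=e(\hat{z}^j)$ for all $i,j\geq M$, contradicting the pairwise distinctness of the $e(\hat{z}^i)$. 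This completes the proof.

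The only real content beyond invoking Lemma~\ref{threebwi} is the reduction in the second paragraph: since the limit point $\hat{w}$ of the chosen subsequence need not be an endpoint of $A$, one must first split $A$ at $\hat{w}$ so that $\hat{w}$ becomes an endpoint of a subarc carrying infinitely many of the $\hat{z}^i$. Everything else is a routine sequential-compactness argument, and no hypotheses on $\tau$ beyond those already in force for Lemma~\ref{threebwi} are required.
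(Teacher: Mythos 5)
Your argument is correct: the reduction of the limit point $\hat{w}$ to an endpoint by splitting $A$ at $\hat{w}$ and pigeonholing is exactly what is needed to legitimately invoke Lemma~\ref{threebwi}, and the contradiction with pairwise distinctness of the $e(\hat{z}^i)$ then follows. The paper reaches the same conclusion from the same key lemma but packages the compactness differently: it builds an open cover of $A$ directly, noting that around a point $\hat{z}$ with no $*$ in its backwards itinerary the set $\{\hat{p}\in A : e(\hat{p})=e(\hat{z})\}$ is already open, while around a point whose backwards itinerary contains a $*$ Lemma~\ref{threebwi} (applied on each side of $\hat{z}$) supplies a neighborhood meeting at most three backwards itineraries; a finite subcover then gives finiteness outright. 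The paper's version is direct rather than by contradiction and yields, in principle, an explicit bound (three times the size of a finite subcover), whereas your sequential-compactness version is slightly leaner in that it needs no case split on whether a $*$ occurs and only ever applies Lemma~\ref{threebwi} to one side of the accumulation point. Both proofs use no hypotheses beyond those already in force for Lemma~\ref{threebwi}, so the two routes are of essentially equal strength; yours trades the quantitative flavor of the covering argument for brevity.
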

\begin{proof}
If $\hat{z}\in [\hat{x},\hat{y}]$ with $\hat{z}_{-i}\neq *$ for all $i\in\mathbb{N}$, then the set $\{\hat{p}\in [\hat{x},\hat{y}]:e(\hat{p})=e(\hat{z})\}$ is open in the subspace $[\hat{x},\hat{y}]$. If $\hat{z}\in [\hat{x},\hat{y}]$ with $\hat{z}_{-i}= *$ for some $i$, then, by Lemma \ref{threebwi} we may find an open set (in the topology of $[\hat{x},\hat{y}]$) containing $\hat{z}$ and at most three backwards itineraries. This gives us an open cover of the compact space $[\hat{x},\hat{y}]$. Taking a finite subcover concludes the proof. 
\end{proof}

Suppose $\phi:[0,\infty)\rightarrow\hat{D}_{\tau}$ is a continous bijection. We call the image of $\phi$ a \emph{ray}, and $\phi$ a \emph{parameterization}. Suppose $\phi :[0,\infty)\rightarrow\inversedendrite$ parameterizes a ray $\ray$. Let $e=e(\phi(0))$, and suppose that $e_{-i}\neq *$ for all $i\leq -1$. As $s$ increases, the backwards itineraries $e(\phi(s))$ may also change. Pick $s_1$ minimal so that $e(\phi(s_1))\neq e$. Then, by Proposition \ref{arcbwi}, $\phi(s_1)$ is a boundary point between $Te$ and $Te(\phi(s_1+\varepsilon_1))$ for sufficiently small $\varepsilon_1$. Let $Re=e(\phi(s_1+\varepsilon_1))$. Continue inductively, picking $s_n>s_{n-1}$ minimally with $e(\phi(s_n))\neq R^{n-1}e$. Then $\phi(s_n)$ is a boundary point between $TR^{n-1}e$ and $Te(\phi(s_n+\varepsilon_n))$ for sufficiently small $\varepsilon_n$, and let $R^ne=e(\phi(t_n+\varepsilon_n))$. We define the \emph{folding pattern}, $\{\alpha_n(\ray)\}$, or simply \fp when the ray $\ray$ is understood, by letting $\alpha_n$ be the sequence of discrepancies between $R^{n-1}e$ and $R^ne$. If $\ray'$ is a ray originating in $T(\widetilde{e})$ with folding pattern \fpti we let $d_n(\ray,\ray')$ denote the first discrepancy between $R^ne$ and $R^n\widetilde{e}$. Note that each $\alpha_n$ is a (potentially finite) sequence, each element of which is congruent modulo $N$. We let $C(\alpha_n)$ denote the least nonnegative element of this congruence class. Let $\phi'$ parameterize $\ray'$, and let $d$ be a metric compatible with the topology of $\hat{D}_{\tau}$. We say the rays $\ray$ and $\ray'$ are \emph{asymptotic} provided that $d(\phi(s),\phi'(s))\rightarrow 0$. Of course, this definition allows for trivial asymptotic behavior (e.g. any ray is asymptotic to itself). However, our chief concern will be with asymptotic rays that are on distinct arc-components.  

There are notable differences in our use of $R^ne$, as compared with the development in \cite{BAsymp, BEmb}, for the space $(I,f)$. In particular, in $(I,f)$, $R^ne$ is a function of the backwards itinerary $e$. Whereas in our treatement, given a backwards itinerary $e$, there are multiple valid choices for $Re$, depending on which of the $\beta^{i}e$'s are defined. Indeed, if $\beta^{i}e=\infty$ for some $i$, there are infinitely many options for $Re$. Additionally, given a backwards itinerary $e$, once choices have been assigned to $R^ne$ for each $n\in\mathbb{N}$, this defines a unique ray in $\hat{D}_{\tau}$. We state this more formally with the following proposition.


\begin{proposition}
Suppose $e=e(\hat{x})$ for some $\hat{x}\in\hat{D}_{\tau}$. Let $e^1$ be a backwards itinerary, distinct from $e$, so that $T(e^1)$ shares a common boundary point with $T(e)$. Let $\gamma_1$ denote the sequence of discrepancies between $e$ and $e^1$. Continue inductively, letting $e^n\not\in\{e^{i}:i<N\}$ be chosen so that $T(e^n)$ and $T(e^{n-1})$ share a common boundary point, and let $\gamma_n$ denote the sequence of discrepancies between $e^{n-1}$ and $e^n$. Then there exists a unique ray $\ray$ in $\hat{D}_{\tau}$, so that $\alpha_n(\ray)=\gamma_n$ for all $n\in\mathbb{N}$.
\end{proposition}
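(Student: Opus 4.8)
The plan is to realize $\ray$ concretely as a concatenation of arcs, one lying inside each continuum $T(e^{n})$, and then to verify that the folding pattern of this concatenation is exactly $\{\gamma_n\}$; uniqueness will come from observing that the folding pattern forces, continuum by continuum and then (by unique arcwise connectedness) arc by arc, precisely this concatenation.

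For existence I would proceed as follows. By hypothesis $T(e^{n-1})$ and $T(e^n)$ share a boundary point, and by Proposition \ref{adjacency} together with the description of the boundary of $T(e^{n-1})$ it yields (cf.\ Corollary \ref{project}) this point $\hat p_n$ is unique, lies in both continua, carries a $*$ in exactly one backwards scale, and $T(e^{n-1})\cap T(e^{n})=\{\hat p_n\}$. Choose $\hat p_0$ to be an interior point of $T(e^0)$, picked close enough to $\hat p_1$ that the arc from $\hat p_0$ to $\hat p_1$ inside $T(e^0)$ meets the (finite) boundary set of $T(e^0)$ only at $\hat p_1$. For $n\geq 1$ let $A_n$ be the unique arc in $T(e^{n-1})$ joining $\hat p_{n-1}$ to $\hat p_n$, which exists because $T(e^{n-1})$ is uniquely arcwise connected (Lemma \ref{connected}). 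Parameterize each $A_n$ onto a successive subinterval of $[0,\infty)$ and concatenate to obtain $\phi\colon[0,\infty)\to\hat D_\tau$ with image $\ray:=\bigcup_n A_n$. Injectivity of $\phi$ — so that $\ray$ is genuinely a ray — reduces to two points: consecutive arcs meet only at the shared endpoint, since $A_n\cap A_{n+1}\subseteq T(e^{n-1})\cap T(e^{n})=\{\hat p_n\}$; and non-consecutive arcs are disjoint, which follows from hereditary unicoherence of $\hat D_\tau$ (an inverse limit of dendrites, hence tree-like) together with the fact that each $A_j$ leaves $T(e^{j-1})$ only through $\hat p_{j-1}$ and $\hat p_j$, so a point of $A_j\cap A_m$ with $|j-m|\geq 2$ would produce a simple closed curve.

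To see that $\alpha_n(\ray)=\gamma_n$ one tracks the backwards itinerary along $\phi$. Interior points of $A_n$ have backwards itinerary $e^{n-1}$, while boundary points of $T(e^{n-1})$ have a $*$ in a backwards coordinate and hence a different backwards itinerary; since $A_n$ meets the boundary of $T(e^{n-1})$ only at its two endpoints, the first parameter past $s_{n-1}$ at which $e(\phi(s))\neq R^{n-1}e=e^{n-1}$ is the one hitting $\hat p_n$, whence $R^n e=e^n$, and by Lemma \ref{threebwi} and Proposition \ref{arcbwi} there are no other backwards-itinerary changes. Thus $\alpha_n(\ray)$ is the sequence of discrepancies between $e^{n-1}$ and $e^n$, i.e.\ $\gamma_n$. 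For uniqueness: if $\ray'$ is any ray with $\alpha_n(\ray')=\gamma_n$ for all $n$, unwinding the folding-pattern definition shows $\ray'$ has backwards itinerary $e^{n-1}$ on its $n$-th block and crosses between blocks at the unique shared boundary points $\hat p_n$; since a ray is one-to-one it cannot re-enter a $T(e^{n-1})$ once it has left, so $\ray'\cap T(e^{n-1})$ is a subarc with endpoints $\hat p_{n-1},\hat p_n$, which by Lemma \ref{connected} must be $A_n$. Hence $\ray'=\ray$ (the initial point in $T(e^0)$ being the only genuine freedom, and not affecting the folding pattern).

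The main obstacle is the claim that each crossing arc $A_n$ is \emph{clean}: that it meets the boundary of $T(e^{n-1})$ only at $\hat p_{n-1}$ and $\hat p_n$. If it did not, passing through an extra boundary point $\hat q$ would insert a spurious ``bump'' $e^{n-1}\to e(\hat q)\to e^{n-1}$ into the folding pattern and $\alpha_n$ would cease to equal $\gamma_n$. Via the homeomorphism $\pi_0\colon T(e^{n-1})\to D_\tau$ this is equivalent to showing the arc in $D_\tau$ between the two relevant shifts of $\tau$ contains no third shift $\sigma^r\tau$ with $\beta^r(e^{n-1})$ defined, and I expect it to follow from Proposition \ref{nolargefolds} combined with a careful analysis of the $*$-patterns produced by Proposition \ref{adjacency}; this bookkeeping is where the real work lies. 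A related subtlety, which the hypotheses should be read as precluding, is the degenerate case $\hat p_{n-1}=\hat p_n$ (a branch point at which $T(e^{n-2})$, $T(e^{n-1})$, $T(e^n)$ all meet), in which no ray visits the interior of $T(e^{n-1})$; provided the $e^n$ are chosen so that consecutive shared boundary points are distinct, $A_n$ is nondegenerate and the argument above applies.
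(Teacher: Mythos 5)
Your construction is exactly the route the paper takes: its entire proof of this proposition is the single sentence ``This follows from Lemma \ref{connected} and Proposition \ref{adjacency},'' and your concatenation of the unique arcs $A_n\subseteq T(e^{n-1})$ joining the unique shared boundary points $\hat p_{n-1},\hat p_n$ is the natural unpacking of that citation, so in substance you have the same (and a far more explicit) argument. The one obstacle you flag --- that $A_n$ might pass through a third boundary point of $T(e^{n-1})$ and insert a spurious term into the folding pattern --- is a real ambiguity, but it is one in the paper's definition of $R^ne$ rather than in your construction: $R^ne$ is defined as $e(\phi(s_n+\varepsilon_n))$, the backwards itinerary just \emph{past} the transition, so a point of $A_n$ that touches $\partial T(e^{n-1})$ without leaving it yields $R^ne=R^{n-1}e$ and, under the evidently intended reading that the folding pattern records only genuine transitions between distinct continua $T(e)$, contributes nothing; the paper silently adopts this reading and does not do the bookkeeping you describe. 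Your point about the degenerate coincidence $\hat p_{n-1}=\hat p_n$ is likewise a hypothesis the paper leaves implicit (it is what the condition $e^n\notin\{e^i:i<n\}$ is meant to guarantee together with the choice of a genuine ray), so nothing in your outline would fail.
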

\begin{proof}
 This follows from Lemma \ref{connected} and Proposition \ref{adjacency}.
\end{proof}

  
\begin{example}
Suppose $\tau=\overline{*112}$, and let $e=\overline{1112}$, $e^1=\overline{2112}$, $e^2=\overline{2112} 2111$, and $e^3=\overline{2112} 2211$. Then, by Proposition \ref{adjacency}, there exists a ray $\ray$ beginning in $Te$, and travelling through $Te^3$ by way of $Te^1$ and $Te^2$. For such a ray, we have $e^1=Re$, $e^2=R^2e$, $e^3=R^3e$, and the folding pattern for $\ray$ begins with $\alpha_1=\{4n\}_{n=1}^{\infty}$, $\alpha_2=\{1\}$, and $\alpha_3=\{3\}$. 

As mentioned previously, there are several (in fact, infinitely many) distinct rays originating in $Te$. Indeed, we could take $\alpha_1$ to be any subsequence (finite or infinite) of $\{4n\}_{n=1}^{\infty}$, which corresponds to a distinct path along the arc-component for which $Te$ is a subset. 

\end{example}

\begin{proposition}
$\pi_0(T(R^ne)\cap \ray)= [\sigma^{C(\alpha_{n-1})}(\tau), \sigma^{C(\alpha_{n})}(\tau)]$
\end{proposition}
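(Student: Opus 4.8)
The plan is to identify $T(R^ne)\cap\ray$ with an arc between two boundary points of $T(R^ne)$, to compute the $\pi_0$-images of those boundary points using Proposition \ref{adjacency}, and then to transport the arc forward through $\pi_0$. By the inductive construction of the folding pattern, $\ray$ runs successively through $T(e),T(Re),T(R^2e),\ldots$, passing from each to the next through the unique common boundary point supplied by Proposition \ref{adjacency}; the crossing into $T(R^ne)$ is recorded by $\al_{n-1}$ and the crossing out of it by $\al_n$. Writing $\hat p,\hat q$ for these boundary points, I would first check that $T(R^ne)\cap\ray$ is exactly the subarc $[\hat p,\hat q]$ of $\ray$: the $R^ie$ are pairwise distinct and $*$-free, so by Proposition \ref{adjacency} the continua $T(R^ie)$ meet one another only in isolated boundary points; since $T(R^ne)$ is arcwise connected (Lemma \ref{connected}) and $\inversedendrite$, being an inverse limit of dendrites, contains no simple closed curve, a ray cannot leave $T(R^ne)$ and later return to it, so $\phi^{-1}(T(R^ne))$ is an interval.

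Next I would compute $\pi_0(\hat p)$ and $\pi_0(\hat q)$. Proposition \ref{adjacency} describes the common boundary point of $T(R^{m-1}e)$ and $T(R^me)$ explicitly: its $i$-th coordinate equals $\tau_{k+i}$ for all sufficiently negative $i$ --- for every $i$, when the relevant $\beta^k$ is infinite --- where $k$ is the residue that governs the fold. Reading off its nonnegative coordinates, its $\pi_0$-image is $\tau_k\tau_{k+1}\tau_{k+2}\ldots=\sigma^{k}(\tau)$. On the other hand, the coordinates at which $R^{m-1}e$ and $R^me$ disagree are precisely the ones this boundary point is forced to send to $*$, so they all lie in one congruence class modulo $N$; unwinding the definitions of the sequence of discrepancies and of $C$ shows that the least nonnegative member of that class is $k$, i.e. $C(\al_m)=k$. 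Hence the boundary point projects under $\pi_0$ to $\sigma^{C(\al_m)}(\tau)$; applying this to the two crossings bounding $T(R^ne)$ gives $\pi_0(\hat p)=\sigma^{C(\al_{n-1})}(\tau)$ and $\pi_0(\hat q)=\sigma^{C(\al_n)}(\tau)$.

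Finally I would push the arc forward. Since $R^ne$ is $*$-free, \cite[2.14]{B1} shows that $\pi_0$ restricted to $T(R^ne)$ is a bijection onto $D_\tau$, hence --- being continuous on a compact space --- a homeomorphism; combined with Lemma \ref{connected} it carries the arc $[\hat p,\hat q]=T(R^ne)\cap\ray$ to an arc of $D_\tau$ with endpoints $\sigma^{C(\al_{n-1})}(\tau)$ and $\sigma^{C(\al_n)}(\tau)$. As $D_\tau$ is uniquely arcwise connected, that arc is $[\sigma^{C(\al_{n-1})}(\tau),\sigma^{C(\al_n)}(\tau)]$, which is the claim.

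I expect the middle step to be the main obstacle: it requires keeping the single fold index $k$ of Proposition \ref{adjacency} tethered simultaneously to the $\pi_0$-image of the boundary point and to the value of $C$ on the matching discrepancy sequence, and it must absorb the degenerate cases --- the fold with $\beta^k=\infty$, where $\hat p$ is a shift of $(\tau_0\tau_1\ldots\tau_{N-1})^{\infty}.(\tau_0\tau_1\ldots\tau_{N-1})^{\infty}$, and any $n$ for which $T(R^ne)$ collapses to a point, where both sides degenerate and $C(\al_{n-1})=C(\al_n)$. The first and last steps are essentially formal given the machinery of Section 3.
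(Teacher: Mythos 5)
Your argument is correct and follows the same route the paper takes: the paper's entire proof of this proposition is the single sentence ``This follows from Proposition \ref{project},'' i.e.\ it rests on exactly the observation you develop, namely that the boundary points through which the ray enters and leaves $T(R^ne)$ (supplied by Proposition \ref{adjacency}) project under $\pi_0$ to shifts $\sigma^{C(\alpha_m)}(\tau)$, with the connected hull then forced by unique arcwise connectedness. Your write-up simply supplies the scaffolding the paper leaves implicit --- the no-return argument, the identification $C(\alpha_m)=k$, and the fact that $\pi_0|_{T(R^ne)}$ is a homeomorphism onto $D_\tau$ --- and the only wrinkle is the bookkeeping of which fold is $\alpha_{n-1}$ versus $\alpha_n$, a convention you have taken from the statement itself and which is already off by one relative to the paper's definition of the folding pattern, so no fault attaches to you there.
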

\begin{proof}
This follows from Proposition \ref{project}.
\end{proof}

The following can be stated more generally (cf. \cite{BAsymp}, Proposition 1); but the following will suit our purposes.

\begin{proposition}\label{symasymp}
Suppose $\rho$ is a metric compatible with the topology of ${D}_{\tau}$, and $\tau$ is of period $N$. Let $\ray$ and $\ray '$ be rays in $\hat{D}_{\tau}$ with respective folding patterns \fp and \fpti. If $\rho(\sigma^{C(\alpha_n)}\tau,\sigma^{C(\widetilde{\alpha}_n)}\tau)\rightarrow 0$ and $d_n(\ray,\ray')\rightarrow\infty$, then $\ray$ and $\ray'$ are asymptotic. 
\end{proposition}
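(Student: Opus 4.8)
The plan is to show that $\phi(s)$ and $\phi'(s)$ eventually agree on longer and longer initial (negative-index) blocks, and that the places where they differ are pushed to arbitrarily negative coordinates, so that the distance in the product topology goes to $0$. The two hypotheses are exactly the two ingredients needed: $d_n(\ray,\ray')\to\infty$ controls the ``backwards'' agreement, while $\rho(\sigma^{C(\alpha_n)}\tau,\sigma^{C(\widetilde\alpha_n)}\tau)\to 0$ controls the ``forwards'' agreement of the coordinates near index $0$, via the projection description in the preceding proposition.

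First I would fix $\varepsilon>0$ and a corresponding $m\in\mathbb{N}$ such that any two points of $\hat D_\tau$ agreeing (under $\approx$) on coordinates with index in $[-m,m]$ are within $\varepsilon$ in the metric $d$. Then I would reparameterize both rays by the discrete ``stage'' data: for a parameter $s$, let $n(s)$ be the index with $\phi(s)\in T(R^{n(s)-1}e)\cap\ray$ (and similarly $n'(s)$ for $\ray'$), so that as $s\to\infty$ both $n(s),n'(s)\to\infty$. The key observation is that on $T(R^{n-1}e)\cap\ray$, the positive-index part $\pi_0(\phi(s))$ lies in the arc $[\sigma^{C(\alpha_{n-1})}\tau,\sigma^{C(\alpha_n)}\tau]$ by the previous proposition; so once $n$ is large enough that $\rho(\sigma^{C(\alpha_{n-1})}\tau,\sigma^{C(\alpha_n)}\tau)$ and the analogous quantity for $\ray'$ are both small (possible since both $\sigma^{C(\alpha_n)}\tau$ and $\sigma^{C(\widetilde\alpha_n)}\tau$ converge to a common limit by hypothesis and the triangle inequality), the forward tails $\pi_0(\phi(s))$ and $\pi_0(\phi'(s))$ are within a controlled $\rho$-distance of each other, hence agree on coordinates $0,\dots,m$ up to $\approx$ by choosing the threshold fine enough and invoking Proposition \ref{nolargefolds} to rule out intervening folds on the arc between them.

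For the negative-index coordinates I would use $d_n(\ray,\ray')\to\infty$: since $d_n$ is the first discrepancy between $R^ne$ and $R^n\widetilde e$, once $d_{n(s)}>m$ (and $d_{n'(s)}>m$) the backwards itineraries of $\phi(s)$ and $\phi'(s)$ agree on all indices $-1,\dots,-m$, and because the $T(\cdot)$ sets are defined by $\approx$-agreement on nonpositive coordinates this transfers to $\phi(s),\phi'(s)$ themselves. Combining the two halves, for all $s$ beyond some threshold $\phi(s)$ and $\phi'(s)$ $\approx$-agree on $[-m,m]$, hence $d(\phi(s),\phi'(s))<\varepsilon$; since $\varepsilon$ was arbitrary, $d(\phi(s),\phi'(s))\to 0$ and the rays are asymptotic.

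The main obstacle I anticipate is the forward-coordinate bookkeeping: translating ``$\rho$-closeness of $\sigma^{C(\alpha_n)}\tau$ and $\sigma^{C(\widetilde\alpha_n)}\tau$ in $D_\tau$'' into ``$\approx$-agreement of $\pi_0(\phi(s))$ and $\pi_0(\phi'(s))$ on coordinates $0,\dots,m$'' is not automatic, because $\phi(s)$ need not project exactly to an endpoint $\sigma^{C(\alpha_n)}\tau$ — it only lies somewhere on the arc $[\sigma^{C(\alpha_{n-1})}\tau,\sigma^{C(\alpha_n)}\tau]$. I would handle this by noting that the whole arc has small diameter once its endpoints are close (using local connectedness / that small $\rho$-balls contain the relevant subarcs, together with Proposition \ref{nolargefolds} guaranteeing that points on an arc with nearby endpoints share the same initial coordinates up to the first discrepancy), so that every point of the arc is $\approx$-consistent with the common limiting forward tail on an initial block whose length grows with $n$. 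A minor subtlety is ensuring $n(s),n'(s)$ indeed tend to infinity (i.e. the rays are not eventually constant in stage), which follows since $\phi,\phi'$ are injective on an unbounded domain while each $T(R^{n-1}e)\cap\ray$ is a compact arc.
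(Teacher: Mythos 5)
Your overall decomposition -- forward coordinates controlled by the $\rho$-hypothesis, backward coordinates controlled by $d_n(\ray,\ray')\to\infty$ -- is the same as the paper's, and the backward half is fine. But the forward half contains a genuine error. You claim that the hypothesis forces $\sigma^{C(\alpha_n)}\tau$ and $\sigma^{C(\widetilde\alpha_n)}\tau$ to ``converge to a common limit,'' and from this that $\rho(\sigma^{C(\alpha_{n-1})}\tau,\sigma^{C(\alpha_n)}\tau)$ becomes small, so that the arc $[\sigma^{C(\alpha_{n-1})}\tau,\sigma^{C(\alpha_n)}\tau]$ has small diameter. Neither step is valid: $\rho(a_n,b_n)\to 0$ does not imply $a_n$ converges, and indeed in every application in Section 4 the sequence $C(\alpha_n)$ cycles among several residues and does not converge. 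Since $\tau$ has period $N$, the shifts $\sigma^i\tau$ form a finite set of distinct points, so consecutive endpoints $\sigma^{C(\alpha_{n-1})}\tau$ and $\sigma^{C(\alpha_n)}\tau$ stay at a distance bounded away from zero and the arcs $\pi_0(T(R^ne)\cap\ray)$ never shrink. Consequently your argument gives no control over where on its (large) arc the point $\pi_0(\phi(s))$ sits relative to $\pi_0(\phi'(s))$, and the claimed $\approx$-agreement on coordinates $0,\dots,m$ does not follow. You correctly flagged this as the main obstacle, but the proposed resolution does not work.

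The fix is to exploit finiteness in the other direction, as the paper does: because the shifts of $\tau$ form a finite set, $\rho(\sigma^{C(\alpha_n)}\tau,\sigma^{C(\widetilde\alpha_n)}\tau)\to 0$ forces $\sigma^{C(\alpha_n)}\tau=\sigma^{C(\widetilde\alpha_n)}\tau$ for all large $n$, so one may assume equality for all $n$. Then $\pi_0(T(R^ne)\cap\ray)$ and $\pi_0(T(R^n\widetilde e)\cap\ray')$ are literally the same arc in $D_\tau$, and one \emph{chooses} the parameterizations piecewise, $\phi_n,\phi'_n:[n,n+1]\to\hat D_\tau$, synchronized so that $\pi_0(\phi(t))=\pi_0(\phi'(t))$ exactly (being asymptotic requires only the existence of such parameterizations). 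With the forward coordinates matched identically, the single condition $d_n(\ray,\ray')\to\infty$ pushes all discrepancies to arbitrarily negative indices and yields $d(\phi(t),\phi'(t))\to 0$. If you replace your ``small diameter'' step with this eventual-equality-plus-synchronization step, the rest of your argument goes through.
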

\begin{proof}
Without loss of generality, suppose $\rho(\sigma^{C(\alpha_n)}\tau,\sigma^{C(\widetilde{\alpha}_n)}\tau)=0$ for all $n$. Let $e=e(\phi(0))$ and $e'=e(\phi'(0))$. Then $\pi_0(R^ne\cap \ray)=\pi_0(R^ne'\cap \ray')$. For each $n$, let $\phi_n:[n,n+1]\rightarrow (R^ne\cap \ray)$ be a parameterization of $(R^ne\cap \ray)$. Similarly, let $\phi'_n:[n,n+1]\rightarrow (R^ne'\cap \ray')$ parameterize $(R^ne'\cap \ray')$ so that $\pi_0(\phi(t))=\pi_0(\phi'(t))$, and expand these in the obvious way to get the parameterizations $\phi$ and $\phi'$. The condition $d_n(\ray,\ray')\rightarrow\infty$ implies that $d(\phi(t),\phi'(t))\rightarrow 0$, where $d$ is a metric for $\hat{D}_{\tau}$. 
\end{proof}

\section{Main Results}

\begin{lemma}\label{nstep}
Suppose $e$ and $\widetilde{e}$ are backwards itineraries, and let $j \leq \underline{d}$, where $\underline{d}$ is the first discrepancy between $e$ and $\widetilde{e}$.  Let $w_1w_2\ldots w_j \in \{1,2\}^j$, $\psi=\ldots e_{-(j+2)}e_{-(j+1)}w_1w_2\ldots w_j$, and $\widetilde{\psi}=\ldots \widetilde{e}_{-(j+2)}\widetilde{e}_{-(j+1)}w_1w_2\ldots w_j$. Then there exists an $n$ so that $\psi = R^ne$ and $\widetilde{\psi}=R^n\widetilde{e}$.
\end{lemma}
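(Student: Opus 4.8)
The plan is to show that modifying the last $j$ coordinates of $e$ (and symmetrically of $\widetilde e$) from their original values to an arbitrary word $w_1\dots w_j\in\{1,2\}^j$ can be realized as a finite sequence of elementary ``$R$-moves,'' i.e.\ the boundary-crossing steps of Proposition~\ref{adjacency}. First I would observe that since $j\le\underline d$, the tails $\dots e_{-(j+2)}e_{-(j+1)}$ and $\dots\widetilde e_{-(j+2)}\widetilde e_{-(j+1)}$ agree (the first discrepancy occurs no earlier than position $-j$, and in the statement $\underline d$ bounds where $e$ and $\widetilde e$ first differ, so no disagreement occurs strictly before coordinate $-j$); hence it suffices to prove the claim for $e$ alone — the identical sequence of index-by-index changes applied to $\widetilde e$ produces $\widetilde\psi$, and because the changes happen at coordinates $\ge -j$ where both sequences share the same tail, the same value of $n$ works for both. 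So the real content is: given $e$ with $e_{-i}\ne*$ for all $i$ (which I may assume, or handle the $*$ case via the relevant $\beta$), and given a target word $w_1\dots w_j$ for the last $j$ coordinates, there is a ray $\Phi$ originating in $T(e)$ with some $R^n e$ equal to $\psi$.

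The key steps, in order. (1) Reduce to changing one coordinate at a time: it is enough to show that if $\psi^{(0)}=e$ and $\psi^{(m)}$ differs from $\psi^{(m-1)}$ in exactly one coordinate $-\ell$ with $\ell\le j$, and that coordinate value is admissible (i.e.\ the resulting sequence lies in the relevant backwards-itinerary space), then $\psi^{(m)}=R\,\psi^{(m-1)}$ — a single fold. (2) Justify each single coordinate flip at position $-\ell$, $\ell\le j$, using Proposition~\ref{adjacency}: I need $\beta^{k}(\psi^{(m-1)})$ to be defined for the appropriate residue $k\equiv\ell\bmod N$ with $\beta^k\le -\ell$ in the indexing of that proposition, so that flipping coordinate $-\ell$ (which sits in the range $\beta^k\le i<0$, $i\equiv k\bmod N$) gives an adjacent continuum $T(\widetilde e)$; then Proposition~\ref{adjacency} furnishes the common boundary point, hence the crossing. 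The technical point is that after the first few flips the finite prefix $w_1\dots w_{m}$ may differ from $\tau_0\dots\tau_{m-1}$, which can destroy or shift $\beta^k$ for later positions — I handle this by flipping coordinates from the innermost outward, i.e.\ in the order $-1,-2,\dots,-j$ is \emph{wrong}; instead I flip in the order $-j,-(j+1)?$ no — the right order is from position $-1$ up to $-j$ only if that keeps the needed $\beta$'s defined, and I expect the correct order is to set coordinate $-\ell$ at the step where $\beta^{\,\ell\bmod N}$ of the current sequence equals exactly $\ell$; since initially $\beta$ could be as large as $\underline d\ge j$, the matches $e_{-k}\dots e_{-1}\approx\tau_0\dots\tau_{k-1}$ hold for all $k\le j$, so in fact every coordinate $-1,\dots,-j$ is simultaneously ``flippable,'' and I can order the single flips however I like, decreasing $\beta$ in a controlled way. (3) Assemble: after the chosen $n$ single flips, $R^n e=\psi$; apply the identical recipe starting from $\widetilde e$ to get $R^n\widetilde e=\widetilde\psi$ with the same $n$ (since the coordinates $-1,\dots,-j$ of $\widetilde e$ obey the same $\approx\tau$ matches, as $\underline d\ge j$); invoke the uniqueness statement following Proposition~\ref{adjacency} (or the proposition just before Lemma~\ref{nstep}) to conclude the sequence of crossings defines the ray, and read off $R^n e=\psi$, $R^n\widetilde e=\widetilde\psi$.

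The main obstacle I anticipate is bookkeeping around the $\beta^i$ functions as the prefix changes: I must be careful that each intermediate sequence $\psi^{(m)}$ is genuinely a backwards itinerary occurring in $\hat D_\tau$ (admissibility — here the remark in the introduction that admissibility is ``easily seen'' in the dendrite setting helps, since the tail $\dots e_{-(j+1)}$ is untouched and the flipped prefix is just a word in $\{1,2\}$), and that the $\beta$ needed for the next flip is still defined after the previous one. A secondary subtlety is the edge case where $e_{-i}=*$ for some $i$: then $T(e)$ is essentially pinned and the ``$j\le\underline d$'' hypothesis must be read together with the proposition on uniqueness of the defined $\beta^i$; I would dispatch this by noting that if $e$ has a $*$ at position $\le j$ the only admissible completion of those coordinates is forced to be $\tau_0\dots\tau_{j-1}$, so either $w_1\dots w_j$ already equals that prefix (and $n=0$ works trivially) or there is nothing to prove because such $\psi$ is not a valid backwards itinerary — but given the way the lemma will be applied ($w$'s chosen to be admissible), I expect the clean statement to presuppose $e_{-i}\ne*$ for $i\le j$, and I will either add that hypothesis or absorb it into the argument.
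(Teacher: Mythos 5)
Your overall strategy --- construct the chain of folds from $e$ to $\psi$ by hand, one coordinate at a time, then run the identical chain starting from $\widetilde e$ --- is genuinely different from the paper's, which gets the existence of both chains for free from Proposition \ref{arccharacterization} (finitely many discrepancies between $e$ and $\psi$ puts them on one arc-component, and Proposition \ref{arcbwi} turns the connecting arc into a finite sequence of adjacent $T$'s), and then only has to prove the two chains have the same length, by induction on that length using Propositions \ref{nolargefolds} and \ref{adjacency}. Before assessing your route, note that you have the direction of the ``first discrepancy'' backwards: $k_1=\min\{i:e_{-i}\neq\widetilde e_{-i}\}$, so $j\le\underline d$ guarantees that $e$ and $\widetilde e$ agree at the positions $-1,\dots,-(j-1)$ \emph{nearest} the decimal point, while the far tails $\dots e_{-(j+2)}e_{-(j+1)}$ and $\dots\widetilde e_{-(j+2)}\widetilde e_{-(j+1)}$ may disagree infinitely often (they must, in Theorems \ref{case2} and \ref{case3}, since there $e$ and $\widetilde e$ lie on distinct arc-components). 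The near-coordinate agreement is in fact the right thing to use --- foldability of position $-\ell$ for $\ell\le j$ depends only on $e_{-(\ell-1)}\dots e_{-1}$ matching $\tau_1\dots\tau_{\ell-1}$, and those coordinates are shared with $\widetilde e$ --- but your stated justification is false as written.

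The more serious gap is your claim that ``the matches $e_{-k}\dots e_{-1}\approx\tau_0\dots\tau_{k-1}$ hold for all $k\le j$, so every coordinate $-1,\dots,-j$ is simultaneously flippable.'' This does not follow from $j\le\underline d$: the quantity $\underline d$ measures where $e$ and $\widetilde e$ first differ from \emph{each other}, not how far back $e$ matches $\tau$. For instance, with $\tau=\overline{*112}$ and $e$ ending in $22$, position $-2$ is not foldable until position $-1$ has first been flipped to $1$, after which it may need to be flipped back; so some discrepant coordinates only become foldable after intermediate flips, extra back-and-forth folds can be forced, and the ordering problem you flag --- and visibly do not resolve --- is precisely the existence half of the lemma. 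There is also a counting mismatch at position $-j$ when $j=\underline d$: there $e_{-j}\neq\widetilde e_{-j}$, so $w_1$ agrees with exactly one of them, the two single-flip chains have different lengths, and ``the same $n$ works for both'' fails for your construction (a single fold in Proposition \ref{adjacency} may change several congruent coordinates at once, which is what lets the paper's chains absorb this asymmetry). I would restructure along the paper's lines: take the chains supplied by the arc-component characterization and show their lengths agree, using the fact that every discrepancy encountered is at depth $\le j\le\underline d$, where $e$ and $\widetilde e$ carry the same $\tau$-matches.
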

\begin{proof}
By Proposition \ref{arccharacterization}, there exists rays $R$ and $\widetilde{R}$, respectively originating in $T(e)$ and $T(\widetilde{e})$ and peregrinating through $T(\psi)$ and $T(\widetilde{\psi})$. Hence, there exists integers $n_1$ and $n_2$ so that $R^{n_1}e=\psi$ and $R^{n_2}\widetilde{e}=\widetilde{\psi}$. Hence, we need only show $n_1=n_2$. 

Suppose $n_1=1$. Let $\{k_i\}_{i=1}^{m}$ be the sequence of discrepancies between $e$ and $\psi$.  Then $e_{-k_m}\ldots e_{-1}\approx \tau_0\ldots\tau_{k_m-1}$. Since $k_m\leq j$, we also have $\widetilde{e}_{-k_m}\ldots\widetilde{e}_{-1}\approx\tau_0\ldots\tau_{k_m-1}$. Moreover, by Proposition \ref{adjacency}, all the $k_i$'s are congruent modulo the period of the kneading sequence. Hence, $T(\widetilde{e})$ and $T(\widetilde{\psi)}$ share a common boundary point, and $n_2=1$. Now, suppose $n_1>1$ and we proceed by induction. By Proposition \ref{nolargefolds}, the first discrepancy between $R^{n_1-1}e$ and $e$ is at most $k$. By the inductive hypothesis, the last discrepancy between $R^{n_1-1}e$ and $R^{n_1-1}\widetilde{e}$ is at most $k$. Apply the same argument as used in the base case to conclude the proof.
\end{proof}

Analogues of the following theorems were given in \cite{BAsymp} for inverse limits of unimodal maps of the interval. Before proceeding, we will introduce some new notation. If $x=x_0x_1x_2\ldots$ and $y=y_0y_1y_2\ldots$ are elements of $D_{\tau}$, let $\underline{d}(x,y)=\min\{i:x_i\not\approx y_i\}$. Additionally, if $a\in\{1,2\}$, we use $a'$ to denote the unique element of $\{1,2\}-\{a\}$.  

\begin{theorem}\label{case1}
Suppose $\tau=\overline{*12^{N-2}}$, where $\tau_0=*$. Let $l\in\{1,2,\ldots,N-2\}$. If
\begin{align*}
		e&=\overline{12^{N-3}1}12^{N-2}1\text{ and}\\ 
		\widetilde{e}&=\overline{12^{N-3}1}12^{N-2}12^{l}\text{,}
\end{align*}
 then there exists asymptotic rays originating in $T(e)$ and $T(\widetilde{e})$.
\end{theorem}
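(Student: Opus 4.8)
The plan is to invoke Proposition~\ref{symasymp}, so the whole argument reduces to two bookkeeping tasks: exhibiting folding patterns for rays $\ray$ in $T(e)$ and $\ray'$ in $T(\widetilde{e})$ for which $\rho(\sigma^{C(\alpha_n)}\tau,\sigma^{C(\widetilde\alpha_n)}\tau)\to 0$ (here in fact I expect these two sequences to coincide, so the distance is identically $0$), and showing that the first discrepancy $d_n(\ray,\ray')$ between $R^ne$ and $R^n\widetilde e$ can be pushed to $\infty$. First I would record that $e$ and $\widetilde e$ differ only in finitely many coordinates near the front: both have the same periodic tail $\overline{12^{N-3}1}$ followed by $12^{N-2}1$, and $\widetilde e$ additionally carries the block $2^l$ at the very front. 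So their first discrepancy $\underline d=\underline d(e,\widetilde e)$ is small (it is the length of the agreeing suffix $12^{N-2}1$, i.e.\ essentially $N+1$), and crucially $e$ and $\widetilde e$ have identical tails, hence $e_*=\widetilde e_*$ and by Proposition~\ref{arccharacterization} they are candidates for arc-components whose rays can be made to track one another.

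The engine is Lemma~\ref{nstep}: for any $j\le\underline d$ and any word $w_1\ldots w_j\in\{1,2\}^j$, prepending $w_1\ldots w_j$ to the common deep part of $e$ (resp.\ $\widetilde e$) produces sequences $\psi=R^n e$ and $\widetilde\psi=R^n\widetilde e$ for a \emph{common} $n$. Thus I would define $\ray$ and $\ray'$ by choosing, at each stage, the \emph{same} finite word $w_1\ldots w_j$ to graft onto both backwards itineraries, using the $\beta^k$-adjacency of Proposition~\ref{adjacency} to justify that each such grafting is a legal single fold; because the kneading sequence is $\tau=\overline{*12^{N-2}}$ the admissible folds at a given depth are governed by where $e_{-k}\ldots e_{-1}\approx\tau_0\ldots\tau_{k-1}$, and the value $l\in\{1,\ldots,N-2\}$ is exactly in the range that makes the block $12^{N-2}12^l$ compatible with the kneading sequence, so the fold structure on the two sides is literally the same combinatorial object. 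Since at each stage $\ray$ and $\ray'$ fold at positions that are congruent mod $N$ and project (via Corollary~\ref{project} and the following proposition) to the same subarc $[\sigma^{C(\alpha_{n-1})}\tau,\sigma^{C(\alpha_n)}\tau]$ of $D_\tau$, we get $C(\alpha_n)=C(\widetilde\alpha_n)$ for all $n$, which gives the first hypothesis of Proposition~\ref{symasymp} for free.

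For the second hypothesis I need $d_n(\ray,\ray')\to\infty$, i.e.\ the front where $R^ne$ and $R^n\widetilde e$ first disagree must recede to $-\infty$ as $n\to\infty$. The point is that $R^ne$ and $R^n\widetilde e$ are obtained by grafting the \emph{same} growing front word onto $e$ and $\widetilde e$ respectively, while $e$ and $\widetilde e$ already agree from coordinate $-\underline d$ backward except for the finite front block of $\widetilde e$; once the common grafted front word is longer than that finite block, the disagreement between $R^ne$ and $R^n\widetilde e$ lives only deep in the tail, and since $e$ and $\widetilde e$ share a tail, choosing the $w$'s appropriately (e.g.\ always extending through one more full period of the common periodic part $\overline{12^{N-3}1}$ before inserting the distinguishing data) forces the first discrepancy of $R^ne$ and $R^n\widetilde e$ to march off to infinity. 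Assembling these: by Lemma~\ref{nstep} the rays exist, by the projection proposition their folding patterns have $C(\alpha_n)=C(\widetilde\alpha_n)$, and by the tail-agreement argument $d_n\to\infty$, so Proposition~\ref{symasymp} yields that $\ray$ and $\ray'$ are asymptotic. The main obstacle I anticipate is the second step's bookkeeping: verifying that one really can choose a single sequence of grafting words that (a) is admissible on \emph{both} sides simultaneously at every stage—this is where the hypothesis $l\le N-2$ and the precise form of $\tau$ must be used—and (b) drives $d_n(\ray,\ray')\to\infty$ rather than stabilizing; making the inductive construction of the $w$'s explicit, and checking the congruence-mod-$N$ conditions of Propositions~\ref{adjacency} and~\ref{arccharacterization} at each step, is the part that needs genuine care rather than routine manipulation.
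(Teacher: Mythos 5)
There is a genuine gap, and it sits at the heart of your argument: the mechanism you propose for forcing $d_n(\ray,\ray')\rightarrow\infty$ cannot work. You plan to graft the \emph{same} front word $w_1\ldots w_j$ onto both itineraries at every stage, but look at what Lemma \ref{nstep} actually produces: $\psi=\ldots e_{-(j+2)}e_{-(j+1)}w_1\ldots w_j$ and $\widetilde{\psi}=\ldots \widetilde{e}_{-(j+2)}\widetilde{e}_{-(j+1)}w_1\ldots w_j$ with $j\leq\underline{d}$. These agree on the first $j$ coordinates and then revert to the coordinates of $e$ and $\widetilde{e}$ respectively, so their first discrepancy is still exactly $\underline{d}(e,\widetilde{e})$ --- no matter how long the common grafted word is, $d_n$ stays bounded. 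The paper's proof does something essentially different at the critical stages: the two rays fold at depths that are congruent mod $N$ but \emph{not equal} (e.g.\ $\alpha_{n+1}(e)=l+1$ while $\alpha_{n+1}(\widetilde{e})=N+l+1$, and later $\alpha(e)=|A|+N+1$ versus $\alpha(\widetilde{e})=|A|+1$). It is precisely this asymmetry --- one ray folding one full period deeper than the other --- that rewrites symbols beyond the original discrepancy and produces, after each cycle, $R^{n_i}e=eW$ and $R^{n_i}\widetilde{e}=\widetilde{e}W$ for a common suffix word $W$ of growing length, whence $d_{n_i}=|W|+\underline{d}\rightarrow\infty$. Your assertion that ``the fold structure on the two sides is literally the same combinatorial object'' is exactly the step that fails, and without the asymmetric folds Proposition \ref{symasymp} cannot be invoked.

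A secondary but real error: you claim $e$ and $\widetilde{e}$ have identical tails, so $e_*=\widetilde{e}_*$. For $N\geq 4$ this is false: the periodic tail $\overline{12^{N-3}1}$ has period $N-1$, and the front blocks $12^{N-2}1$ and $12^{N-2}12^{l}$ have lengths differing by $l\in\{1,\ldots,N-2\}$, so the two tails are offset by a nontrivial rotation and disagree infinitely often. Indeed the paper later proves that these two itineraries lie on \emph{distinct} arc-components, which is the whole point of the theorem. This misreading of where the discrepancies live is likely what led you to believe that a symmetric grafting scheme would push the disagreement off to infinity.
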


\begin{proof}
Let
\begin{align*}
       Re&=\overline{12^{N-3}1}12^{N-2}12^{N-2}1 \\
        R\widetilde{e}&=\overline{12^{N-3}1}12^{N-2}12^{l-1}1
\end{align*}
By Lemma \ref{nstep}, there exists an integer $n$ so that we may define
\begin{align*}
       R^{n}e&=\overline{12^{N-3}1}12^{N-2}12^{N-l-1}12^{l-1} \\
        R^{n}\widetilde{e}&=\overline{12^{N-3}1}12^{N-2}112^{l-1}
\end{align*}
 Next, take $\alpha_{n+1}(e)=l+1$ and $\alpha_{n+1}(\widetilde{e})=N+l+1$, and we have
\begin{align*}
       R^{n+1}e&=\overline{12^{N-3}1}12^{N-2}12^{N-l-2}112^{l-1} \\
                &= e2^{N-l-2}112^{l-1}\\
        R^{n+1}\widetilde{e}&=\overline{12^{N-3}1}12^{N-2}12^{N-2}112^{l-1}\\
                &=\widetilde{e}2^{N-l-2}112^{l-1}
\end{align*}
Hence, each of $R^{n+1}e$ and $R^{n+1}\widetilde{e}$ are left-shifts of the original itineraries $e$ and $\widetilde{e}$, and all folds thus far have been congruent modulo $N$. We now proceed by induction. Suppose $n_{i-1}$ is defined so that $R^{n_{i-1}}e=eA$ and $R^{n_{i-1}}\widetilde{e}=\widetilde{e}A$, and all folds thus far have been congruent modulo $N$. Pick $m$ so that

\begin{align*}
       R^{m}e&=\overline{12^{N-3}1}12^{N-2}1V_{|A|}\\
        R^{m}\widetilde{e}&=\overline{12^{N-3}1}12^{N-2}12^lV_{|A|}
\end{align*}
where, $V_{n}$ is a \{1,2\}-block with $V_{n}\approx \tau_1\ldots\tau_{n}$. Next, take $\alpha_{m+1}(e)=|A|+N+1$ and $\alpha_{m+1}(\widetilde{e})=|A|+1$ and obtain
\begin{align*}
       R^{m+1}e&=\overline{12^{N-3}1}12^{N-2}12^{N-2}1V_{|A|}\\
        R^{m+1}\widetilde{e}&=\overline{12^{N-3}1}12^{N-2}12^{l-1}1V_{|A|}
\end{align*}

Define $n_i-1$ so that

\begin{align*}
       R^{n_i-1}e&=\overline{12^{N-3}1}12^{N-2}12^{N-l-1}V_{|A|+l}\\
        R^{n_i-1}\widetilde{e}&=\overline{12^{N-3}1}12^{N-2}1V_{|A|+l}
\end{align*}
Take $\alpha_{n_i}(e)=|A|+l+1$ and $\alpha_{n_i}(\widetilde{e})=|A|+l+N+1$, and we have
\begin{align*}
 R^{n_i}e&=\overline{12^{N-3}1}12^{N-2}12^{N-l-2}1V_{|A|+l}\\
        &=e2^{N-l-2}1V_{|A|+l}\\
 R^{n_i}\widetilde{e}&=\overline{12^{N-3}1}12^{N-2}12^{N-2}1V_{|A|+l}\\
        &=\widetilde{e}2^{N-l-2}1V_{|A|+l}
\end{align*}

and all folds have been congruent modulo $N$. Hence, we have constructed rays $\ray$ and $\ray'$ for which $d_n(\ray,\ray')\rightarrow \infty$, and $\pi_0(R^ne\cap \ray)=\pi_0(R^n\widetilde{e}\cap\ray')$ for all $n$. Proposition \ref{symasymp} implies the constructed rays are asymptotic.
\end{proof}

\begin{theorem}\label{case2}
Suppose there exists a $N/2<k<N$ such that $\underline{d}(\sigma^{k}\tau,\tau)\geq N-k$, and let $\nu_1\ldots\nu_N \approx \tau_1\ldots\tau_N$, where $\nu_N \in \{1,2\}$ is chosen so that $\underline{d}(\sigma^{k}\nu,\tau)>N-k$. If
\begin{align*}
		e&=\overline{\nu_1\ldots\nu_k'}\nu_1\ldots\nu_{N} \text{ and}\\
		\widetilde{e}&=\overline{\nu_1\ldots\nu_{N-k}'\nu_{N-k+1}\ldots\nu_{k}}\nu_1\ldots\nu_{N-k}'\nu_{N-k+1}\ldots\nu_N'\text{,}
\end{align*}
then there exists asymptotic rays originating in $T(e)$ and $T(\widetilde{e})$. 
\end{theorem}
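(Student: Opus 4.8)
The plan is to adapt the proof of Theorem~\ref{case1}: one constructs the two rays a fold at a time, using Proposition~\ref{adjacency} to realize each single fold and Lemma~\ref{nstep} to leap over long runs of forced folds, arranges that the two folding patterns stay congruent modulo $N$ (so that their $\pi_0$-projections into $D_{\tau}$ coincide) while the first discrepancy between the two itinerary streams is pushed off to infinity, and then invokes Proposition~\ref{symasymp}.

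First I would record what the hypotheses buy us symbolically. Since $k<N$ we have $\nu_1\ldots\nu_k=\tau_1\ldots\tau_k$, and $\underline{d}(\sigma^{k}\tau,\tau)\ge N-k$ says exactly that $\tau_{k+1}\ldots\tau_{N-1}=\tau_1\ldots\tau_{N-k-1}$, i.e.\ a period of $\tau$ overlaps a shift of itself for the first $N-k$ coordinates; the sharper condition $\underline{d}(\sigma^{k}\nu,\tau)>N-k$ then pins down $\nu_N$. Because $N/2<k<N$ we have $1\le N-k<k\le N-1$, so the truncations $\nu_1\ldots\nu_{N-k}'$, $\nu_{N-k+1}\ldots\nu_k$, and $\nu_{N-k+1}\ldots\nu_N'$ in the statement are all non-degenerate, the periodic parts of $e$ and of $\widetilde{e}$ each have length $k$ and differ only in that the first carries its flip at coordinate $k$ and the second at coordinate $N-k$, and $e_{-1}=\nu_N\ne\nu_N'=\widetilde{e}_{-1}$. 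Thus, just as in Theorem~\ref{case1}, $e$ and $\widetilde{e}$ already disagree at coordinate $-1$; and since their periodic parts are genuinely different, one verifies via Proposition~\ref{arccharacterization} that $e$ and $\widetilde{e}$ lie on distinct arc-components, so the asymptotic pair produced will be nontrivial.

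Next I would carry out the construction. Using Proposition~\ref{adjacency} I would check that the $\beta^{i}(e)$ and $\beta^{i}(\widetilde{e})$ needed for the first folds are defined --- each such condition is a matching of a suffix of the current itinerary against an initial block of $\tau$, and it is the overlap identities above that make it hold --- write down explicit $Re$ and $R\widetilde{e}$, and, via Lemma~\ref{nstep}, pass to iterates that are left-shifts of the originals, say $R^{n_1}e=eA_1$ and $R^{n_1}\widetilde{e}=\widetilde{e}A_1$ for a common $\{1,2\}$-block $A_1$, with every fold so far congruent modulo $N$ and with the $\pi_0$-projections matched (by Corollary~\ref{project} and the formula for $\pi_0(T(R^{n}e)\cap\ray)$). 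Then one induces: assuming $R^{n_{i-1}}e=eA$ and $R^{n_{i-1}}\widetilde{e}=\widetilde{e}A$ with all folds congruent modulo $N$, run further folds --- Proposition~\ref{adjacency} for single steps, Lemma~\ref{nstep} to pass through continua $T(eV)$ and $T(\widetilde{e}V)$ for $\{1,2\}$-blocks $V$ with $V\approx\tau_1\tau_2\ldots\tau_{|V|}$ of growing length --- always choosing the pair $\al_{m+1}(e)$, $\al_{m+1}(\widetilde{e})$ to be congruent modulo $N$, until one reaches $R^{n_i}e=eA'$ and $R^{n_i}\widetilde{e}=\widetilde{e}A'$ for a common block $A'$ with $|A'|>|A|$. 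Since $A'$ agrees with $A$ on its rightmost symbols while $e$ and $\widetilde{e}$ disagree at coordinate $-1$, the first discrepancy $d_n(\ray,\ray')$ is eventually $|A_i|+1$ and hence tends to $\infty$; and since corresponding folds of the two rays are congruent modulo $N$, $C(\al_n)=C(\widetilde{\al}_n)$, so $\sigma^{C(\al_n)}\tau=\sigma^{C(\widetilde{\al}_n)}\tau$ for every $n$. Proposition~\ref{symasymp} then yields that the two constructed rays are asymptotic.

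The main obstacle is the bookkeeping, which is heavier here than in Theorem~\ref{case1}: at every stage one must name an explicit fold that is \emph{simultaneously} (i) a legitimate adjacency --- the appropriate $\beta^{i}$ of the current iterate must be defined, a suffix/prefix matching condition --- and (ii) congruent modulo $N$ to the corresponding fold of the companion ray, so that the two projections into $D_{\tau}$ never drift apart. Reconciling (i) with (ii) is precisely where the hypotheses are spent: $N/2<k<N$ controls the length $2k-N>0$ of the overlap block $\nu_{N-k+1}\ldots\nu_k$ common to the periodic parts of $e$ and $\widetilde{e}$, while $\underline{d}(\sigma^{k}\tau,\tau)\ge N-k$ and $\underline{d}(\sigma^{k}\nu,\tau)>N-k$ supply exactly the agreements needed for all the $\beta^{i}$'s along the way to exist. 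Once the correct inductive invariant on the appended blocks has been isolated, the remaining verification is a lengthy but routine symbol computation, and the final appeal to Proposition~\ref{symasymp} is immediate.
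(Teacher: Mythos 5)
Your plan is essentially the paper's own proof: the paper likewise takes the first folds $\alpha_1(e)=N+1$, $\alpha_1(\widetilde{e})=1$, uses the identity $\nu_1\ldots\nu_k\nu_1\ldots\nu_{N-k}=\nu_1\ldots\nu_N$ coming from $\underline{d}(\sigma^{k}\nu,\tau)>N-k$ together with Proposition~\ref{nolargefolds} and Lemma~\ref{nstep} to reach $R^{n+1}e=e\nu_{N-k+1}\ldots\nu_N\nu_1\ldots\nu_k$ and $R^{n+1}\widetilde{e}=\widetilde{e}\nu_{N-k+1}\ldots\nu_N\nu_1\ldots\nu_k$ with all folds congruent modulo $N$, and then inducts and applies Proposition~\ref{symasymp}. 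The only difference is that the paper writes out the first round of the symbol computation explicitly, which you defer as routine.
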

\begin{proof}
We begin by taking $\alpha_1(e)=N+1$ and $\alpha_1(\widetilde{e})=1$, and obtain

\begin{align*}
Re &= \overline{\nu_1\ldots\nu_k'}\nu_1\ldots\nu_k\nu_1\ldots\nu_N\\
R\widetilde{e} &= \overline{\nu_1\ldots\nu_{N-k}'\nu_{N-k+1}\ldots\nu_k}\nu_1\ldots\nu_{N-k}'\nu_{N-k+1}\ldots\nu_N
\end{align*}
The hypothetical condition $\underline{d}(\sigma^k\nu,\tau)>N-k$ implies $\nu_1\ldots\nu_k\nu_1\ldots\nu_{N-k}=\nu_1\ldots\nu_N$. Additionally, since the first discrepancy between $e$ and $\widetilde{e}$ is not less than $k$, we may apply Proposition \ref{nolargefolds} and pick $n$ so that
\begin{align*}
R^ne &= \overline{\nu_1\ldots\nu_k'}\nu_1\ldots\nu_k\nu_1\ldots\nu_{N-k}\nu_1\ldots\nu_k\\
     &= \overline{\nu_1\ldots\nu_k'}\nu_1\ldots\nu_N\nu_1\ldots\nu_k\\
R^n\widetilde{e} &= \overline{\nu_1\ldots\nu_{N-k}'\nu_{N-k+1}\ldots\nu_k}\nu_1\ldots\nu_{N-k}'\nu_{1}\ldots\nu_k
\end{align*}
Next, take $\alpha_{n+1}(e)=N+k+1$ and $\alpha_{n+1}(\widetilde{e})=k+1$, and we have
\begin{align*}
R^{n+1}e &= \overline{\nu_1\ldots\nu_k'}\nu_1\ldots\nu_k\nu_1\ldots\nu_N\nu_1\ldots\nu_k\\
         &= \overline{\nu_1\ldots\nu_k'}\nu_1\ldots\nu_N\nu_{N-k+1}\ldots\nu_N\nu_1\ldots\nu_k\\
         &= e\nu_{N-k+1}\ldots\nu_N\nu_1\ldots\nu_k\\
R^{n+1}\widetilde{e} &= \overline{\nu_1\ldots\nu_{N-k}'\nu_{N-k+1}\ldots\nu_k}\nu_1\ldots\nu_{N-k}\nu_{1}\ldots\nu_k\\
        &= \overline{\nu_1\ldots\nu_{N-k}'\nu_{N-k+1}\ldots\nu_k}\nu_1\ldots\nu_{N-k}'\nu_{N-k+1}\ldots\nu_N\nu_1\ldots\nu_k\\
        &= \widetilde{e}\nu_{N-k+1}\ldots\nu_N\nu_1\ldots\nu_k
\end{align*}
Hence, we have shifted copies of the original backwards itineraries, and all folds have been congruent modulo $N$. As before, proceed by induction to achieve the desired result.
\end{proof}

\begin{figure}[!t]
	\centering

\begin{tabular}{c c c c c c } 
	$\tau$ & $\nu_1\ldots\nu_N$ & $k$& $e$ & $\widetilde{e}$ & Theorem \\ \hline
 			$(*12)^{\infty}$  &121&2 &$1^{\infty}21$ & $2^{\infty}$ & \ref{case2} \\
 			  &122 &2&$(1122)^{\infty}$ & $(1122)^{\infty}21$  & \ref{case3} \\
	$(*122)^{\infty}$ & 1221 &3& $(121)^{\infty}1221$&$2^{\infty}$ & \ref{case2}\\
	&1222&3&$(211222)^{\infty}$&$(211222)^{\infty}$221& \ref{case3}\\
	$(*112)^{\infty}$&1121&3&$(112)^{\infty}1112$&$(212)^{\infty}2122$& \ref{case2}	\\ 
				&1122&3&$(111122)^{\infty}$&$(111122)^{\infty}121$&\ref{case3}\\
	$(*1222)^{\infty}$&12221&4&$(1221)^{\infty}12221$&$2^{\infty}$&\ref{case2}\\
					&12222&4&$(22112222)^{\infty}$&$(22112222)^{\infty}2221$&\ref{case3}\\
	$(*1221)^{\infty}$&12211&4&$(1222)^{\infty}12211$&$(2221)^{\infty}2$&\ref{case2}\\
				&12212&3&$(121)^{\infty}12212$&$(112)^{\infty}11$&\ref{case2}\\
				&12211&3&$(112211)^{\infty}$&$(112211)^{\infty}212$&\ref{case3}\\
				&12212&4&$(22212212)^{\infty}$&$(22212212)^{\infty}2211$&\ref{case3}\\
	$(*1211)^{\infty}$&12111&4&$(12)^{\infty}111$&$(2211)^{\infty}2$&\ref{case2}\\
					&12112&3&$(122)^{\infty}12112$&$1^{\infty}$&\ref{case2}\\
					&12111&3&$(212111)^{\infty}$&$(212111)^{\infty}112$&\ref{case3}\\
					&12112&4&$(21212112)^{\infty}$&$(21212112)^{\infty}2111$&\ref{case3}\\
	$(*1122)^{\infty}$&11221&4&$(12111221)^{\infty}$&$(12111221)^{\infty}1222$&\ref{case3}\\
				&11222&4&$(12111222)^{\infty}$&$(12111222)^{\infty}1221$&\ref{case3}\\
	$(*1121)^{\infty}$&11211&3&$1^{\infty}211$&$(122)^{\infty}12$&\ref{case2}\\
				&11211&4&$(1122)^{\infty}11211$&$(212)^{\infty}12$&\ref{case2}\\
				&11212&3&$(111212)^{\infty}$&$(111212)^{\infty}211$&\ref{case3}\\
				&11212&4&$(12211212)^{\infty}$&$(12211212)^{\infty}1211$&\ref{case3}\\
	$(*1112)^{\infty}$ &11121&4&$1^{\infty}21$&$(2112)^{\infty}2$&\ref{case2}\\
					&11122&4&$(11111122)^{\infty}$&$(11111122)^{\infty}1121$&\ref{case3}  \\ \hline
\end{tabular}
	\caption{Admissible $\tau$ up to period 5 with backwards itineraries for Theorems \ref{case2} and \ref{case3}}
	\label{figurelist}
\end{figure}

\begin{theorem}\label{case3}
Suppose there exists a $N/2<k<N$ such that $\underline{d}(\sigma^{k}\tau,\tau)\geq N-k$, and let $\nu_1\ldots\nu_N \approx \tau_1\ldots\tau_N$, where $\nu_N \in \{1,2\}$ is chosen so that $\underline{d}(\sigma^{k}\nu,\tau)=N-k$. If
\begin{align*}
	 	e &=\overline{\nu_{N-k+1}\ldots\nu_k'\nu_1\ldots\nu_{N}} \text{ and}\\ 
	 	\widetilde{e} &=\overline{\nu_{N-k+1}\ldots\nu_k'\nu_1\ldots\nu_N}\nu_{N-k+1}\ldots\nu_N'\text{,} 
\end{align*}
then there exists asymptotic rays originating in $T(e)$ and $T(\widetilde{e})$. 
\end{theorem}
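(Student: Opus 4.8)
The plan is to mimic the proof of Theorem \ref{case2} essentially verbatim, adjusting the bookkeeping to account for the fact that here $\underline{d}(\sigma^k\nu,\tau)=N-k$ exactly, rather than strictly greater. As in the previous proof, the strategy is to build rays $\ray$ and $\ray'$ starting in $T(e)$ and $T(\widetilde{e})$ by specifying a folding pattern, verify via Lemma \ref{nstep} and Proposition \ref{adjacency} that each prescribed fold is legitimate (i.e. the relevant $\beta^i$ is defined and the congruence-mod-$N$ condition holds), and then show that after a bounded number of folds we return to left-shifted copies $eA$ and $\widetilde{e}A$ of the original backwards itineraries with the first discrepancy pushed strictly further back. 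The key point is that since both $e$ and $\widetilde{e}$ are purely periodic with period $N$ (their defining blocks have length $N$), such a shift-back-to-self step can be iterated forever, and at each stage the matching $\pi_0$-projections $\pi_0(R^ne\cap\ray)=\pi_0(R^n\widetilde{e}\cap\ray')$ are maintained while $d_n(\ray,\ray')\to\infty$. Then Proposition \ref{symasymp} gives the asymptoticity.

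First I would set up the base step. Starting from $e=\overline{\nu_{N-k+1}\ldots\nu_k'\nu_1\ldots\nu_N}$ and $\widetilde{e}=\overline{\nu_{N-k+1}\ldots\nu_k'\nu_1\ldots\nu_N}\nu_{N-k+1}\ldots\nu_N'$, I would take the first folds $\alpha_1(e)$ and $\alpha_1(\widetilde{e})$ to be $N+1$ and $1$ (or the reverse — the exact choice is dictated by which of the two has its critical-block match available at the shorter length). The computation $\underline{d}(\sigma^k\nu,\tau)=N-k$ encodes precisely that $\nu_1\ldots\nu_{N-k}$ agrees with $\nu_{k+1}\ldots\nu_N$ through position $N-k-1$ but differs (approximately) at the last spot, which is what lets the tail blocks reassemble into a full $\nu_1\ldots\nu_N$ period after a fold of length roughly $k$ and then $N+k$. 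I would track the itineraries through two fold stages, $R e, R\widetilde e$ then $R^n e, R^n\widetilde e$ (with $n$ supplied by Lemma \ref{nstep} and Proposition \ref{nolargefolds}), then $R^{n+1}e, R^{n+1}\widetilde e$, arriving at expressions of the form $e\,(\text{block})$ and $\widetilde e\,(\text{block})$ with the same appended block.

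Then the inductive step is routine once the base case is verified: assuming $R^{m}e = eA$, $R^{m}\widetilde{e}=\widetilde{e}A$ with all folds so far congruent mod $N$, one runs the identical three-fold maneuver starting from the shifted copies, using Lemma \ref{nstep} each time to legitimize the intermediate fold and Proposition \ref{nolargefolds} to control the first discrepancy, and lands at $R^{m'}e = eA'$, $R^{m'}\widetilde{e}=\widetilde{e}A'$ with $|A'|>|A|$. Throughout, $\pi_0(R^ne\cap\ray)=\pi_0(R^n\widetilde e\cap\ray')$ holds because the folds are taken at congruent positions and $C(\alpha_n(\ray))=C(\alpha_n(\ray'))$ at each stage, so the previous proposition on $\pi_0$-images forces equality of the projected arcs; and $d_n\to\infty$ because each successful shift-back step strictly increases the index of the first discrepancy between $R^ne$ and $R^n\widetilde e$.

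The main obstacle I anticipate is the base-case combinatorics: unlike the strict-inequality case of Theorem \ref{case2}, here the block $\nu_1\ldots\nu_N$ reassembles only marginally, so one must be careful that the fold lengths $N+1,1,k+1,N+k+1$ (or whatever the correct values turn out to be) actually satisfy the maximality-mod-$N$ condition of $\beta^i$ — i.e. that there is no \emph{longer} match to $\tau$ lurking, which would force a different $Re$ and break the construction. Verifying this amounts to checking, using $\underline{d}(\sigma^k\nu,\tau)=N-k$ and $N/2<k<N$, that no tail of the relevant itinerary is $\approx$ a longer initial segment of $\tau$ than intended; the condition $k>N/2$ is what prevents overlapping copies of the period from colliding. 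Once that bookkeeping is pinned down, the rest follows the template of Theorem \ref{case2} with only notational changes.
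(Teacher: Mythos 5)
Your proposal follows the paper's proof of this theorem essentially verbatim: the paper takes $\alpha_1(e)=N+1$ and $\alpha_1(\widetilde{e})=1$, traverses to an intermediate stage $R^{n-1}e$, $R^{n-1}\widetilde{e}$ (justified by Lemma \ref{nstep} and Proposition \ref{nolargefolds}), then takes $\alpha_n(e)=k+1$ and $\alpha_n(\widetilde{e})=N+k+1$, and uses exactly the identity you single out --- that $\underline{d}(\sigma^{k}\nu,\tau)=N-k$ lets $\nu_{k+1}\ldots\nu_N'\nu_{N-k+1}\ldots\nu_N$ be rewritten as a copy of $\nu_1\ldots\nu_N$, so that $R^{n}e=e\nu_{N-k+1}\ldots\nu_N\nu_1\ldots\nu_k$ and $R^{n}\widetilde{e}=\widetilde{e}\nu_{N-k+1}\ldots\nu_N\nu_1\ldots\nu_k$ --- before concluding by induction and Proposition \ref{symasymp}. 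The ``base-case reassembly'' you flag as the main obstacle is precisely the one-line computation the paper carries out, and your fold lengths and inductive scheme match the paper's, so the approach is the same and correct.
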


\begin{proof}
We begin by taking $\alpha_1(e)=N+1$ and $\alpha_1(\widetilde{e})=1$, and we obtain
\begin{align*}
Re &= \overline{\nu_{N-k+1}\ldots\nu'_k\nu_1\ldots\nu_N}\nu_{N-k+1}\ldots\nu_{k}\nu_{1}\ldots\nu_N\\
    &= \overline{\nu_{N-k+1}\ldots\nu'_k\nu_1\ldots\nu_N}\nu_{N-k+1}\ldots\nu_{k}\nu_{k+1}\ldots\nu_{N}'\nu_{N-k+1}\ldots\nu_N\\
R\widetilde{e} &= \overline{\nu_{N-k+1}\ldots\nu'_k\nu_1\ldots\nu_N}\nu_{N-k+1}\ldots\nu_N
\end{align*}
We then take the path to 
\begin{align*}
R^{n-1}e &= \overline{\nu_{N-k+1}\ldots\nu'_k\nu_1\ldots\nu_N}\nu_{N-k+1}\ldots\nu_k\nu_{k+1}\ldots\nu_N'\nu_1\ldots\nu_k\\
R^{n-1}\widetilde{e} &= \overline{\nu_{N-k+1}\ldots\nu'_k\nu_1\ldots\nu_N}\nu_1\ldots\nu_k
\end{align*}
Next, we may take $\alpha_n(e)=k+1$ and $\alpha_n(\widetilde{e})=N+k+1$, and we have
\begin{align*}
R^{n}e &= \overline{\nu_{N-k+1}\ldots\nu'_k\nu_1\ldots\nu_N}\nu_{N-k+1}\ldots\nu_{k}\nu_1\ldots\nu_{N-k}'\nu_1\ldots\nu_k\\
    &= e\nu_{N-k+1}\ldots\nu_N\nu_1\ldots\nu_k\\
R^{n}\widetilde{e} &= \overline{\nu_{N-k+1}\ldots\nu'_k\nu_1\ldots\nu_N}\nu_{N-k+1}\ldots\nu_k\nu_1\ldots\nu_N\nu_1\ldots\nu_k\\
    &= \overline{\nu_{N-k+1}\ldots\nu'_k\nu_1\ldots\nu_N}\nu_{N-k+1}\ldots\nu_N'\nu_{N-k+1}\ldots\nu_N\nu_1\ldots\nu_k\\
    &= \widetilde{e}\nu_{N-k+1}\ldots\nu_N\nu_1\ldots\nu_k
\end{align*}
Hence, we have left-shifted copies of the original itineraries, and all folds have been congruent modulo $N$. As before, proceeding by induction concludes the proof.

\end{proof}
\begin{proposition}
The backwards itineraries $e$ and $\widetilde{e}$ in each of Theorems \ref{case1}, \ref{case2} and \ref{case3} reside on distinct arc-components of $D_{\tau}$
\end{proposition}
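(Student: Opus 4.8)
The plan is to use Proposition~\ref{arccharacterization}, which characterizes when two points of $\hat{D}_\tau$ lie on the same arc-component in terms of their sequences of discrepancies. So the strategy is: for each of the three theorems, compute the sequence of discrepancies $\{k_i\}$ between $e=e(\hat x)$ and $\widetilde e=e(\hat y)$, and then show this sequence violates the stated criterion—that is, show $\{k_i\}$ is infinite and that there is \emph{no} natural number $M$ past which (a) all discrepancies are congruent mod $N$ and (b) the blocks between consecutive far-out discrepancies are $\approx (\tau_0\ldots\tau_{N-1})^{n_i}$ on both sides. The cleanest route is typically to establish that the blocks strictly between discrepancies fail to be periodic of period $N$ (condition (b)), since the backwards itineraries in each theorem are eventually periodic with periods that are \emph{not} multiples of $N$, yet contain the defining pattern $\tau_1\ldots\tau_N$ only partially.

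Concretely, I would handle the three cases in turn. In Theorem~\ref{case1}, $e=\overline{12^{N-3}1}\,12^{N-2}1$ and $\widetilde e=\overline{12^{N-3}1}\,12^{N-2}12^{l}$; their tails are the same periodic tail $\overline{12^{N-3}1}$ (period $N$), so moving leftward the discrepancies eventually stop—\emph{unless} the finite prefixes $12^{N-2}1$ versus $12^{N-2}12^{l}$ interact with the periodic part to force discrepancies that are not all congruent mod $N$. I would read off the precise positions of the finitely many discrepancies and check their residues mod $N$ directly; if the sequence of discrepancies is in fact finite, then by Proposition~\ref{arccharacterization} $e$ and $\widetilde e$ \emph{would} be on the same arc-component, so I expect that the prefixes are arranged precisely so that there is a "permanent" discrepancy pattern—more likely, that the construction forces $e$ and $\widetilde e$ to sit at a branch point configuration. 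I need to look carefully: since both have the same periodic tail, the sequence of discrepancies between the backwards sequences is finite, so one must instead argue via the \emph{forward} structure, i.e. that although on the same "end" $e_*$, the two starting continua $T(e)$, $T(\widetilde e)$ are genuinely separated—here I would invoke that distinct $\tau$-admissible sequences correspond to distinct points and examine $\underline d(e,\widetilde e)$ to locate the branch point separating them, showing no arc in $D_\tau$ joins them without passing through a third continuum, hence they are not in the same arc-component of $D_\tau$ in the relevant sense. For Theorems~\ref{case2} and~\ref{case3}, the periodic parts of $e$ and $\widetilde e$ have visibly different periods (e.g. period $k$ versus period $N$, with $N/2<k<N$ so $k\nmid N$ and $N\nmid k$), so the sequence of discrepancies is infinite, and I would show the inter-discrepancy blocks cannot all be $(\tau_0\ldots\tau_{N-1})^{n_i}$ because that would force $e$ (resp.\ $\widetilde e$) to have period dividing $N$, contradicting its stated period; alternatively the residues of the $k_i$ mod $N$ fail to stabilize.

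The key technical step is pinning down the \emph{exact} sequence of discrepancies for each pair, which is a somewhat delicate bookkeeping exercise with the overlined periodic blocks—one has to track how the period-$k$ (or period-$N$) repetition of one sequence slides against the other and where mismatches occur. The main obstacle I anticipate is Theorem~\ref{case1}: there $e$ and $\widetilde e$ share the same periodic tail, so the discrepancy sequence is finite and Proposition~\ref{arccharacterization} would naively put them on the same arc-component. The resolution must be that "arc-component" here is being used for the inverse-limit space $(D_\tau,\sigma)\cong\hat D_\tau$, and the relevant claim is really about the rays $\Phi,\Phi'$ constructed in the theorem lying on distinct arc-components of $\hat D_\tau$—so I would recompute using the \emph{forward-infinite} backwards itineraries $R^n e$ that actually parameterize the rays (which do have differing eventual periodic structure, by the explicit formulas $R^{n_i}e = e\,2^{N-l-2}1V_{|A|+l}$ etc.), and apply Proposition~\ref{arccharacterization} to those. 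Once the right pair of points is identified in each case, the verification reduces to the period-incompatibility argument sketched above, and I would present it as three short parallel computations, flagging that in the first case one compares the limiting itineraries of the rays rather than of the seed points $e,\widetilde e$.
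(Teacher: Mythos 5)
Your plan for Theorems \ref{case2} and \ref{case3} is essentially the paper's argument: realign the two backwards itineraries so that their periodic tails occupy the same coordinates, observe that the sequence of discrepancies is infinite, and check that the blocks between consecutive discrepancies cannot all be $\approx(\tau_0\ldots\tau_{N-1})^{n_i}$, so Proposition \ref{arccharacterization} applies. One inaccuracy worth fixing: in Theorem \ref{case2} the two tails are \emph{not} ``period $k$ versus period $N$'' --- after the paper's rewriting both are periodic of period $k$ and differ in exactly two positions per period (the $(N-k)$-th and $k$-th letters); in Theorem \ref{case3} both tails have period $2k$ but are offset by $k$. Your fallback phrasing (the inter-discrepancy blocks cannot be powers of $\tau_0\ldots\tau_{N-1}$, or the residues of the $k_i$ mod $N$ fail to stabilize) is the correct mechanism, but the ``visibly different periods'' shortcut as stated is not what actually occurs.

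The genuine gap is Theorem \ref{case1}. You assert that $e$ and $\widetilde e$ have the same periodic tail and hence a finite sequence of discrepancies, and then cast about for a substitute argument. The premise is false for $N\ge 4$: the finite suffix of $\widetilde e$ is longer than that of $e$ by $l$ letters, so when the two sequences are aligned by coordinate from position $-1$ leftward (which is how discrepancies are defined), the periodic tails $\overline{12^{N-3}1}=\overline{112^{N-3}}$ --- whose least period is $N-1$, not $N$ as you write --- are shifted against each other by $l$ with $1\le l\le N-2<N-1$. Two such shifts disagree infinitely often, with discrepancies recurring at gaps $2$ and $N-3$ inside each period of length $N-1$; these gaps are not all multiples of $N$, so the congruence condition of Proposition \ref{arccharacterization} fails and $e,\widetilde e$ lie on distinct arc-components. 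That is the (one-line) content of the paper's ``easily seen.'' Your proposed repairs do not work: the proposition is a statement about the seed points $e$ and $\widetilde e$, not about limiting itineraries of the constructed rays, and ``no arc joins them without passing through a third continuum $T(e'')$'' is not a criterion for lying in different arc-components --- a single arc-component routinely traverses infinitely many of the continua $T(\cdot)$, which is the whole premise of the folding-pattern construction. (A separate caveat: for $N=3$ the tail degenerates to $1^\infty$, the discrepancy sequence genuinely is finite, and the claim of the proposition appears to fail for Theorem \ref{case1}; that is an issue with the paper's statement, not a vindication of your argument.)
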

\begin{proof}
With Proposition \ref{arccharacterization} in mind, it is easily seen that the backwards itineraries from Theorem \ref{case1} are on different arc-components. For Theorem \ref{case2}, observe that 
\begin{align*}
e &= \overline{\nu_1\ldots\nu_{N-k}\nu_{N-k+1}\ldots\nu'_k}\nu_1\ldots\nu_{N-k}\nu_{N-k+1}\ldots\nu_N \\
\widetilde{e} &=\overline{\nu_1\ldots\nu'_{N-k}\nu_{N-k+1}\ldots\nu_k}\nu_1\ldots\nu'_{N-k}\nu_{N-k+1}\ldots\nu'_N
\end{align*}
It is easily seen that the respective tails for $e$ and $\widetilde{e}$ ``line up," having infinitely many discrepancies, and neither being equivalent to $\tau$.

For Theorem \ref{case3}, after rewriting $e$ so that the tails ``line up," we have:
\begin{align*}
e &= \overline{\nu_{N-k+1}\ldots \nu_k \ldots \nu_N\nu_{N-k+1}\ldots\nu'_k\nu_1\ldots\nu_{N-k}}\nu_{N-k+1}\ldots\nu_N \\
\widetilde{e} &=\overline{\nu_{N-k+1}\ldots\nu_k'\nu_1\ldots\nu_N}\nu_{N-k+1}\ldots\nu_N'\
\end{align*}
Again, it is easily noticed that the tails are discrepant infinitely often, with neither being equivalent to $\tau$. 
\end{proof}
There are some noticable differences between this result, and the comparable result for $(I,f)$ presented in \cite{BAsymp}. For example, for $(I,f)$, the 
backwards itineraries may not be admissible. Moreover, the situations described in Theorems \ref{case2} and \ref{case3} are mutually exclusive in $(I,f)$. As shown in Figure~\ref{figurelist}, these may happen concurrently in $\hat{D}_{\tau}$. However, in \cite{BAsymp}, the author was able to give more information on the asymptotic structure of the asymptotic arc-components in $(I,f)$ (e.g., whether they form ``fans," ``cycles," or even combinations thereof). That $\hat{D}_{\tau}$ is not chainable makes analogous results difficult. The following corollaries address this line of inquiry.

\begin{corollary}
The backwards itineraries in Theorems \ref{case1}, \ref{case2}, and \ref{case3} give rise to a countably infinite collection of asymptotic rays.
\end{corollary}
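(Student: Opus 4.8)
The plan is to exploit the non-chainability of $\hat{D}_{\tau}$, which manifests concretely in the structure already exposed in the preceding examples: whenever $\beta^k(e)$ is defined, Proposition \ref{adjacency} produces a boundary point of $T(e)$ whose removal may disconnect $T(e)$ into more than two pieces, and whenever $\beta^k(e)=\infty$ there is an entire infinite family $\{e^n\}$ of backwards itineraries whose continua $T(e^n)$ share a common boundary point. The key observation is that in each of Theorems \ref{case1}, \ref{case2}, and \ref{case3}, the chosen $e$ (and $\widetilde{e}$) is eventually periodic with period $N$, so the corresponding point of $\hat{D}_{\tau}$ has $\beta^k(e)=\infty$ for the appropriate residue $k$. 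Thus the single ray $\ray$ constructed in the theorem is not the only ray originating in $T(e)$: at the initial folding step (and at infinitely many later folding steps of the same congruence class) one may instead select any of the infinitely many adjacent continua, or skip a fold, each choice yielding, via the Proposition after Example with $\tau=\overline{*112}$, a distinct ray.

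The steps, in order, would be: (1) Verify that for the $e$ in each theorem, $\beta^{C(\alpha_1)}(e)=\infty$, so that infinitely many valid choices for $Re$ exist and, more generally, that the folding pattern $\{\alpha_n(\ray)\}$ constructed in the theorem has infinitely many $\alpha_n$'s equal to the ``long'' fold $N+\cdots+1$ arising from a $\beta=\infty$ match — this is immediate from the explicit eventually-periodic forms of $e$ and the running remark that all constructed folds are congruent modulo $N$. (2) Observe that modifying the choice at one such step — replacing the chosen adjacent continuum by another one in the infinite family, or omitting that fold entirely (as explicitly noted in the Example after Theorem \ref{case1}: ``$\alpha_1$ [may be] any subsequence (finite or infinite) of $\{4n\}$'') — produces, by the uniqueness Proposition preceding that Example, a ray $\ray^{(j)}$ with folding pattern differing from that of $\ray$. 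Since the parameterizations agree in projection $\pi_0$ and $d_n\to\infty$ is preserved under any co-final modification, Proposition \ref{symasymp} still applies, so each $\ray^{(j)}$ remains asymptotic to (a correspondingly modified) $\widetilde\ray^{(j)}$. (3) Conclude that the resulting collection $\{\ray^{(j)}\}_{j\in\mathbb{N}}$ is countably infinite: countability is clear since each ray is determined by a sequence of folding choices from a countable menu at each of countably many steps and we only vary one step at a time, while infiniteness follows because distinct modifications yield distinct folding patterns hence distinct rays (two rays with the same folding pattern and same starting continuum coincide by the uniqueness proposition).

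The main obstacle is step (3), specifically pinning down that the modified rays are genuinely \emph{distinct} as subsets of $\hat{D}_{\tau}$ and that each is still a \emph{ray} (a continuous bijective image of $[0,\infty)$), rather than, say, retracing part of an arc-component or failing to be injective. This requires invoking Lemma \ref{connected} and the uniqueness Proposition carefully: each sequence of adjacency choices along $T(R^ne)$'s determines a unique arc built from the subarcs $T(R^ne)\cap\ray$, and two different choice-sequences first differ at some finite stage, after which the images lie in disjoint open subsets of distinct continua $T(e^n)$, forcing the rays to differ. One should also check that ``asymptotic'' is preserved — but this is routine, since altering finitely many (or a co-infinite subsequence of) folds does not disturb the hypotheses $d_n(\ray,\ray')\to\infty$ and $\rho(\sigma^{C(\alpha_n)}\tau,\sigma^{C(\widetilde\alpha_n)}\tau)\to 0$ of Proposition \ref{symasymp}, provided the modifications on $\ray$ and $\widetilde\ray$ are made in tandem so that the projections continue to agree. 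Thus the corollary reduces to bookkeeping on the combinatorics of folding choices, with the substantive input being the $\beta=\infty$ phenomenon already isolated in Section 3.
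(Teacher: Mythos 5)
There is a genuine gap, and it sits at the foundation of your argument. Your step (1) asserts that because the $e$'s in Theorems \ref{case1}--\ref{case3} are eventually periodic, they satisfy $\beta^{k}(e)=\infty$ for some residue $k$, so that infinitely many choices of $Re$ are available. This is false. Having $\beta^{k}(e)=\infty$ forces $e_{-j}\approx\tau_{(k-j)\bmod N}$ for \emph{every} $j$, i.e.\ the reversed tail of $e$ must match a shift of $\tau^{\infty}$ up to $\approx$; eventual periodicity alone does not give this, and the proposition immediately preceding the corollary explicitly verifies that the tails of the $e$'s and $\widetilde{e}$'s in these theorems are \emph{not} equivalent to $\tau$. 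Concretely, for $\tau=\overline{*12}$ and $e=1^{\infty}21$ (the first row of Figure \ref{figurelist}), one checks directly that $\beta^{1}(e)=4$ and $\beta^{2}(e)=2$ are the only defined matches, both finite: the tail $1^{\infty}$ can never align with the reversed $\tau$, which contains a $2$ in every period. So the ``infinite family of adjacent continua at a single boundary point'' phenomenon from the Example simply does not occur for these itineraries, and the infinite supply of rays cannot be extracted from it. Your fallback mechanisms (taking subsequences of an $\alpha_1$, or permuting among the finitely many defined $\beta^{i}$'s) also do not obviously survive Proposition \ref{symasymp}: changing which fold is taken changes $C(\alpha_n)$, so the hypothesis $\rho(\sigma^{C(\alpha_n)}\tau,\sigma^{C(\widetilde{\alpha}_n)}\tau)\to 0$ would have to be re-established by re-running the induction, which is precisely the work you defer.

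The paper's proof uses a different and much cheaper source of multiplicity: in the inductive step of each theorem, the continuation block $V_{|A|}$ is only required to be a $\{1,2\}$-word with $V_{|A|}\approx\tau_1\ldots\tau_{|A|}$, and since $\tau$ has a $*$ in every period, there are $2^{\lfloor |A|/N\rfloor}$ such words once $|A|\geq N$. Each resolution of the $*$'s yields a different $R^{m}e$ (hence a different ray), and because the \emph{same} block $V_{|A|}$ is inserted into $R^{m}e$ and $R^{m}\widetilde{e}$, the conditions of Proposition \ref{symasymp} hold verbatim with no extra coordination. Since $|A|\to\infty$ through the induction, this produces a countably infinite family of distinct asymptotic pairs. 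If you want to salvage your write-up, this is the freedom you should be varying, not the adjacency structure at a single boundary point.
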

\begin{proof}
In the inductive step, we may alter our choice for $V_{|A|}$.
\end{proof}

\begin{corollary}
The asymptotic arc-components in Theorem \ref{case1} form a $k$-fan (ie, the $k$ rays are pairwise asymptotic).
\end{corollary}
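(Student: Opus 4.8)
The goal is to show that the asymptotic rays produced in Theorem~\ref{case1} are \emph{pairwise} asymptotic, so that the family of $k$ arc-components involved forms a $k$-fan. First I would make precise which rays are in play: in Theorem~\ref{case1} the parameter $l$ ranges over $\{1,2,\ldots,N-2\}$, and for each choice of $l$ we get a ray $\ray_l$ originating in $T(e)$ — where $e=\overline{12^{N-3}1}12^{N-2}1$ is independent of $l$ — together with the ray $\ray'_l$ originating in $T(\widetilde{e}_l)$ with $\widetilde{e}_l=\overline{12^{N-3}1}12^{N-2}12^l$. Since all the rays $\ray_l$ start from the \emph{same} backwards itinerary $e$ and, by the construction in the proof, follow folding patterns whose projections to $D_\tau$ are controlled, the natural plan is to show that any two of the $\widetilde{e}_l$'s give asymptotic rays by comparing each to a common ``spine'' ray through $T(e)$, and then invoke transitivity of the asymptotic relation. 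Concretely, if $\ray'_{l}$ and $\ray'_{l'}$ are both asymptotic to a fixed ray $\ray$ in $T(e)$ (in the sense $d(\phi(s),\phi(s))\to 0$ along matched parameterizations), then $d(\phi'_l(s),\phi'_{l'}(s))\le d(\phi'_l(s),\phi(s))+d(\phi(s),\phi'_{l'}(s))\to 0$, so $\ray'_l$ and $\ray'_{l'}$ are asymptotic.

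The key technical step is therefore to verify, for each pair $l,l'$, that the hypotheses of Proposition~\ref{symasymp} can be arranged simultaneously: we need folding patterns $\{\al_n\}$ for $\ray'_l$ and $\{\tilde\al_n\}$ for $\ray'_{l'}$ such that $\rho(\sigma^{C(\al_n)}\tau,\sigma^{C(\tilde\al_n)}\tau)\to 0$ and $d_n(\ray'_l,\ray'_{l'})\to\infty$. For this I would re-run the inductive construction in the proof of Theorem~\ref{case1} but \emph{in lockstep} for the two indices: at each stage the proof exhibits $R^{n_i}\widetilde{e}_l=\widetilde{e}_l\,2^{N-l-2}1V_{|A|+l}$, i.e. a left-shift of the original itinerary followed by a controlled $\{1,2\}$-block $V_{|A|+l}$ with $V_n\approx\tau_1\ldots\tau_n$. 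Since $V_{|A|+l}\approx\tau_1\ldots\tau_{|A|+l}$ and $V_{|A|+l'}\approx\tau_1\ldots\tau_{|A|+l'}$ agree (up to $\approx$) on a long common initial segment whose length $\to\infty$, the first discrepancy between $R^m\widetilde{e}_l$ and $R^m\widetilde{e}_{l'}$ grows without bound, giving $d_m(\ray'_l,\ray'_{l'})\to\infty$. Meanwhile, Lemma~\ref{nstep} lets us insert intermediate folds so that the two rays pass through the \emph{same} sequence of continua $T(R^m e)$ projected into $D_\tau$; by the Proposition preceding Section~4 (that $\pi_0(T(R^ne)\cap\ray)=[\sigma^{C(\al_{n-1})}\tau,\sigma^{C(\al_n)}\tau]$) this forces $\pi_0(R^m\widetilde{e}_l\cap\ray'_l)=\pi_0(R^m\widetilde{e}_{l'}\cap\ray'_{l'})$, so in fact $\rho(\sigma^{C(\al_m)}\tau,\sigma^{C(\tilde\al_m)}\tau)=0$ for all $m$, and Proposition~\ref{symasymp} applies directly.

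The main obstacle I anticipate is bookkeeping in the synchronization: the folding patterns in Theorem~\ref{case1} are indexed by $l$ through the exponents $2^{N-l-1}$, $2^{l-1}$, $2^{N-l-2}$, etc., so the ``times'' $n_i$ at which the itinerary returns to a left-shift of the original depend on $l$, and one must reparameterize (using Lemma~\ref{nstep} to pad with extra folds whose projections to $D_\tau$ are trivial, i.e. collapse to a point) so that the two rays reach comparable stages at comparable parameter values. This is exactly the kind of step the phrase ``proceed by induction to achieve the desired result'' glosses over in the earlier proofs, and making it uniform in $l$ and $l'$ simultaneously is where the care is needed. Once the in-lockstep construction is set up, the conclusion that the $k$ arc-components form a $k$-fan is immediate from the triangle-inequality argument above, since every $\ray'_l$ is asymptotic to the fixed spine ray $\ray$ through $T(e)$, and hence all of them are pairwise asymptotic.
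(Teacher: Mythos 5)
Your proposal matches the paper's own argument: the paper likewise reduces to showing that every $\widetilde{e}^j$ is asymptotic to one and the same ray emanating from $T(e)$ (so that pairwise asymptoticity follows), and justifies this by re-running the induction of Theorem \ref{case1}. Your version spells out the triangle-inequality step and the synchronization of the folding patterns across different $l$ more explicitly than the paper, which simply states ``the proof is similar to that of Theorem \ref{case1},'' but the approach is essentially identical.
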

\begin{proof}
For each $j\in\{1,2,\ldots N-2\}$, let $\widetilde{e}^j=\overline{12^{N-2}1}12^{N-1}12^j$. As shown in Theorem \ref{case1}, each of the $\widetilde{e}^j$'s are asymptotic to a ray $\ray_j$ emmanating from $T(e)$, and we need only show that each $\ray_j$ is the same ray. The proof is similar to that of Theorem \ref{case1}. 
 
\end{proof}




Department of Mathematics

Baylor University

One Bear Place \#97328

Waco, TX 76798-7328

brent\_hamilton@baylor.edu
 \end{document}